\numberwithin{equation}{section}
\theoremstyle{plain}
\newtheorem{lemma}[equation]{Lemma}
\newtheorem{theorem}[equation]{Theorem}
\newtheorem{corollary}[equation]{Corollary}
\newtheorem{proposition}[equation]{Proposition}
\theoremstyle{definition}
\newtheorem{definition}[equation]{Definition}
\newtheorem{remark}[equation]{Remark}
\newtheorem{example}[equation]{Example}
\newtheorem{problem}[equation]{Problem}
\DeclareMathOperator{\pd}{pd}
\DeclareMathOperator{\rk}{rk}
\DeclareMathOperator{\codim}{codim}
\DeclareMathOperator{\Ext}{Ext}
\DeclareMathOperator{\Hom}{Hom}
\DeclareMathOperator{\Der}{Der}
\newcommand{\Q}{\mathbb{Q}}
\newcommand{\A}{\mathcal{A}}
\newcommand{\B}{\mathcal{B}}
\newcommand{\C}{\mathcal{C}}
\newcommand{\V}{\mathcal{V}}
\newcommand{\KK}{\mathbb{K}}
\newcommand{\Sym}{\mathfrak{S}}
\newcommand\ddxi[1]{\partial_{x_{#1}}}
\title{Projective dimension of weakly chordal \\ graphic arrangements}
\author{Takuro Abe}
\address{Department of Mathematics, Rikkyo University, 3-34-1 Nishi-Ikebukuro, Toshima-ku, 1718501 Tokyo, Japan}
\email{abetaku@rikkyo.ac.jp}
\author{Lukas Kühne}
\address{Fakult\"at f\"ur Mathematik,
	Universit\"at Bielefeld, D-33501 Bielefeld, Germany}
\email{lkuehne@math.uni-bielefeld.de}
\author{Paul Mücksch}
\address{Fakult\"at f\"ur Mathematik, Ruhr-Universit\"at Bochum,
D-44780 Bochum, Germany}
\email{paul.muecksch+uni@gmail.com}
\author{Leonie Mühlherr}
\address{Fakult\"at f\"ur Mathematik,
	Universit\"at Bielefeld, D-33501 Bielefeld, Germany}
\email{lmuehlherr@math.uni-bielefeld.de}
\begin{document} 
	
	\begin{abstract}
		A graphic arrangement is a subarrangement of the braid arrangement whose set of hyperplanes is determined by an undirected graph.
		A classical result due to Stanley, Edelman and Reiner states that a graphic arrangement is free if and only if
		the corresponding graph is chordal, i.e., the graph has no chordless cycle with four or more vertices.
		In this article we extend this result by proving that the module of logarithmic derivations of a
		graphic arrangement has projective dimension at most one if and only if the corresponding graph is weakly chordal, 
		i.e., the graph and its complement have no chordless cycle with five or more vertices.
	\end{abstract} 

	
	
	\maketitle
	
	
	\section{Introduction}
	\label{sec:Introduction}
	
	The principal algebraic invariant associated to a hyperplane arrangement $\A$ is its module
	of logarithmic vectors fields or derivation module $D(\A)$.
	Such modules provide an interesting class of finitely generated graded modules over the coordinate ring of the ambient space
	of the arrangement.
	The chief problem is to relate the algebraic structure of $D(\A)$ to the combinatorial structure of $\A$,
	i.e.,\ whether it is free or more generally to determine its projective dimension or even graded Betti numbers.
	In general, this is notoriously difficult and still wide open, at its center is Terao's famous conjecture which states that
	over a fixed field of definition, the freeness of $D(\A)$ is completely determined by combinatorial data.
	Conversely, one might ask which combinatorial properties of $\A$ are determined by the algebraic structure of $D(\A)$.
	
	It is natural to approach these very intricate questions by restricting attention to certain distinguished classes of arrangements.
	
	A prominent and much studied class are the \emph{graphic arrangements}, around which our present article revolves.
	They are defined as follows.
	\begin{definition} 
		Let $V \cong \Q^\ell$ be an $\ell$-dimensional $\Q$-vector space.
		Let $x_1,..,x_\ell$ be a basis for the dual space $V^*$. 
		Given an undirected graph $G = (\V,E)$ with $\V=\{1,\dots,\ell\}$, define an arrangement $\A(G)$ by \[\A(G) \coloneqq \{\ker(x_i-x_j) \vert \{i,j\}\in E\}.\]
	\end{definition} 
	
	Our aim is to study the module $D(\A(G))$ of a graphic arrangement $\A(G)$.
	In fact, regarding the freeness of $D(\A(G))$, a nice complete answer is given by the following theorem,
	due to work by Stanley \autocite{Sta72}, and Edelman and Reiner \autocite{Edelman}.
	\begin{theorem}[{\autocite[Thm.~3.3]{Edelman}}]
		\label{thm:ChordalFree}
		The module $D(\A(G))$ is free if and only if the graph $G$ is chordal,
		i.e.,\ $G$ does not contain a chordless cycle with four or more vertices. 
	\end{theorem}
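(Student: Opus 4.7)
The plan is to prove the two directions separately: the ``if'' direction by an inductive freeness argument via Terao's addition-deletion theorem, and the ``only if'' direction by contrapositive using Terao's factorization theorem applied to the cycle graph $C_k$.

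For the forward direction, I would argue by induction on $|V(G)|$. Since $G$ is chordal it admits a simplicial vertex $v$, i.e., one whose neighborhood $N(v)=\{u_1,\dots,u_d\}$ induces a clique; the subgraph $G' = G - v$ is again chordal and $\A(G')$ (extended trivially by the coordinate $x_v$) is free by induction. I would then add the edges $\{v,u_1\},\dots,\{v,u_d\}$ one at a time to produce a chain $\A_0 \subsetneq \A_1 \subsetneq \cdots \subsetneq \A_d = \A(G)$ and invoke addition-deletion at each step. The key point is that when $v$ and $u_i$ are identified via the restriction to $H_i = \ker(x_v-x_{u_i})$, the result is the graphic arrangement of the edge-contraction $G_i / \{v,u_i\}$, and the clique property of $N(v)$ ensures that this contracted graph remains chordal; hence the restriction is free by a nested induction with exponents matching the hypotheses of Terao's addition criterion.

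For the reverse direction I would proceed by contrapositive. Suppose $G$ contains a chordless cycle $C_k$ on vertices $\{v_1,\dots,v_k\}$ with $k \geq 4$, and let $X$ be the flat $\{x_{v_1}=\cdots=x_{v_k}\}$ in $L(\A(G))$. Because the cycle is \emph{induced}, the hyperplanes $\ker(x_i-x_j) \in \A(G)$ containing $X$ are precisely those indexed by edges of $C_k$; thus the localization $\A(G)_X$ coincides with $\A(C_k)$ up to a trivial product factor in the remaining coordinates. Since localizations of free arrangements are free, it suffices to show that $\A(C_k)$ is not free. Its characteristic polynomial equals the chromatic polynomial of $C_k$, namely $\chi(\A(C_k),t)=(t-1)^k+(-1)^k(t-1)=(t-1)\bigl[(t-1)^{k-1}-(-1)^{k-1}\bigr]$, whose roots are of the form $1+\zeta$ for $\zeta$ a $(k-1)$-th root of $\pm 1$; only one or two such roots are real integers, so for $k \geq 4$ the polynomial fails to split into integer linear factors and Terao's factorization theorem forbids freeness.

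The principal obstacle is the combinatorial bookkeeping in the forward induction: one must verify at each step that the edge-contraction $G_i / \{v,u_i\}$ is again chordal (so the nested induction applies) and that the resulting exponents match in the way required by addition-deletion, perhaps most cleanly by phrasing the argument in terms of supersolvability, where a perfect elimination ordering directly yields a modular chain in $L(\A(G))$. The reverse direction, by contrast, is largely mechanical once the correct localization is identified and the chromatic polynomial of $C_k$ is computed.
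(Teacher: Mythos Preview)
The paper does not prove this theorem; it is quoted as a known result due to Stanley and Edelman--Reiner, cited as \autocite[Thm.~3.3]{Edelman}, and used throughout as a standing ingredient. There is therefore no proof in the paper to compare your proposal against.

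That said, your outline is the standard argument and is correct. For the forward direction, the obstacle you flag in your last paragraph in fact dissolves once you observe that $u_1,\dots,u_{i-1}$ already lie in $N_{G'}(u_i)$ (they all belong to the clique $N_G(v)$); hence the contraction $G_i/\{v,u_i\}$ is canonically isomorphic to $G'$ itself. In particular it is chordal by the outer induction hypothesis, the restriction $\A_i^{H_i}$ is isomorphic to $\A(G')$, and its exponents are exactly $\exp(\A_{i-1})$ with one entry removed, so Terao's addition applies at every step. Alternatively, as you note, a perfect elimination ordering directly exhibits $\A(G)$ as supersolvable and the addition--deletion bookkeeping is avoided altogether. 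For the reverse direction your computation is complete: writing $u=t-1$, the factor $u^{k-1}+(-1)^{k}$ has at most two real roots ($u=-1$ when $k$ is even, $u=\pm1$ when $k$ is odd), so for $k\ge4$ there are non-real roots and Terao's factorization theorem forbids freeness of $\A(C_k)$; combined with \Cref{prop:LocPD} (the case $\pd=0$ says localizations of free arrangements are free) this gives the contrapositive.
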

	
	A recent refined result was established in \autocite{tran2022matfree} by Tran and Tsujie, who showed that the subclass of so-called strongly chordal graphs in the class of chordal graphs corresponds to the subclass of MAT-free arrangements, cf.\ \autocite{ABCHT16_FreeIdealWeyl}, \autocite{CunMue19_MATfree}.
	
	In this note, we will investigate the natural question raised by Kung and Schenck in~\cite{KungSchenck} of whether it is possible to give a characterization of graphs $G$,
	similar to \Cref{thm:ChordalFree},
	for which the projective dimension of $D(\A(G))$ is bounded by a certain positive value.
	To this end, we consider the more general notion of \emph{weakly chordal} graphs introduced by Hayward~\autocite{Hayward1}:
    
	\begin{definition}
		A graph $G$ is weakly chordal if $G$ and its complement graph $G^C$ do not contain a chordless cycle with five or more vertices.
	\end{definition}
	It was subsequently discovered that many algorithmic questions that are intractable for arbitrary graphs become efficiently solvable within the class of weakly chordal graphs \autocite{Hayward2}.
 
	The main result of this paper is the following: 
	\begin{theorem}\label{thm:main}
		The projective dimension of $D(\A(G))$ is at most $1$ if and only if the graph $G$ is weakly chordal.
		Moreover, the projective dimension is exactly $1$ if $G$ is weakly chordal but not chordal.
	\end{theorem}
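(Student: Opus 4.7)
The ``moreover'' part follows immediately from the main equivalence together with \Cref{thm:ChordalFree}: if $G$ is weakly chordal but not chordal, the biconditional gives $\pd D(\A(G)) \leq 1$, while \Cref{thm:ChordalFree} rules out freeness, so $\pd D(\A(G)) = 1$. The substance of the statement is therefore the biconditional itself, which I would split into sufficiency and necessity.

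For the \emph{sufficiency} direction (weakly chordal $\Rightarrow \pd D(\A(G)) \le 1$), I plan to argue by induction, with induction parameter the number of edges that must be added to $G$ to reach a chordal supergraph. The base case $G$ chordal is \Cref{thm:ChordalFree}. For the inductive step, I would invoke the classical structure theorem of Hayward--Ho{\`a}ng--Maffray: every non-chordal weakly chordal graph $G$ contains a \emph{two-pair}, i.e., a non-edge $\{u,v\}$ with the property that every induced $u$--$v$ path has length exactly two, and the supergraph $G' \coloneqq G + \{u,v\}$ is again weakly chordal. The addition-deletion short exact sequence associated to the triple $(\A(G'),\A(G),\A(G')\cap H_{\{u,v\}})$ reads
\[
0 \to D(\A(G)) \to D(\A(G')) \to D(\A(G')\cap H_{\{u,v\}}, m) \to 0,
\]
where the rightmost term is a multi-arrangement whose multiplicities record how many hyperplanes of $\A(G')$ collapse under the restriction. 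Its underlying simple arrangement is the graphic arrangement of the contracted graph $G'/\{u,v\}$. The crux is to show, using the two-pair property, that $G'/\{u,v\}$ is chordal \emph{and} that the associated multi-arrangement on it is free; given this, the inequality $\pd D(\A(G)) \leq \max(\pd D(\A(G')),1)$ extracted from the exact sequence closes the induction.

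For the \emph{necessity} direction, I would argue by contrapositive. If $G$ is not weakly chordal, then by definition $G$ or its complement $G^C$ contains an induced chordless cycle $C_n$ with $n \geq 5$. Using the standard fact that localization of $D(\A)$ at a flat does not increase projective dimension, I reduce to the special cases $G=C_n$ and $G=\overline{C_n}$, where $\overline{C_n}$ is the anti-cycle. In each case I would establish $\pd D(\A(G)) \geq 2$ by a direct resolution calculation---for instance by exhibiting three minimal generators of the first syzygy module that cannot fit into a resolution of length one, or equivalently by comparing the Hilbert series of $D(\A(G))$ against what any length-one resolution would force via Solomon--Terao-type identities and the known exponent constraints.

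The principal obstacles are twofold. First, in the sufficiency step one must prove freeness of the multi-arrangement obtained by restricting $\A(G')$ to the hyperplane $H_{\{u,v\}}$ corresponding to a two-pair: this is delicate because the contracted graph carries non-trivial multiplicities reflecting common neighbours of $u$ and $v$, so one is forced outside the ``simple'' graphic setting and into genuine multi-arrangement combinatorics. Second, in the necessity step, the anti-cycle case $\overline{C_n}$ cannot be reduced to the cycle case---for $n \geq 6$ the graph $\overline{C_n}$ need not contain any induced cycle of length $\geq 5$---and hence requires an independent projective dimension computation for this dense family. Both subproblems are where the bridge between the combinatorics of weakly chordal graphs and the homological structure of $D(\A(G))$ genuinely has to be built.
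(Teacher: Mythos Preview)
Your necessity direction matches the paper's strategy: reduce via localization to $C_m$ ($m\ge 5$) or to the antihole $C_\ell^C$ ($\ell\ge 6$) and show $\pd\ge 2$ in each case. The paper carries this out for the antihole by constructing an explicit minimal free resolution of $D(\A(C_\ell^C))$ (Theorem~\ref{thm:antihole}), so your ``direct resolution calculation'' is exactly what is done, just with all details supplied.

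Your sufficiency direction, however, has a genuine gap. The claim that $G'/\{u,v\}$ is chordal when $\{u,v\}$ is a two-pair is false. Take two $4$-cycles sharing a single vertex, say on $\{1,2,3,4\}$ and $\{4,5,6,7\}$; this graph is weakly chordal and $\{1,3\}$ is a two-pair, but after adding $\{1,3\}$ and contracting it the second $C_4$ on $\{4,5,6,7\}$ survives untouched as an induced cycle. Consequently the restricted (multi-)arrangement contains $\A(C_4)$ as a localization and is \emph{not} free, so your inequality $\pd D(\A(G))\le\max(\pd D(\A(G')),1)$ cannot be extracted. A second, independent gap is that the sequence you write is not automatically right exact: surjectivity of the Euler/Ziegler restriction map is a nontrivial condition, not a given.

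The paper avoids both problems by using a different exact sequence and a different combinatorial input. Instead of the two-pair theorem it uses Hayward's edge-generation theorem (Theorem~\ref{thm:wc}) to produce an edge $e$ that lies on no induced $C_4$ of $G'=G+e$; this is exactly the condition guaranteeing that every rank-$3$ localization of $\A(G')$ through $H_e$ is free. Instead of the restriction multi-arrangement the paper uses Terao's $B$-sequence $0\to D(\A(G'))\to D(\A(G))\to \bar S\cdot\bar B$, whose right-hand term is a principal $\bar S$-module and hence always has $\pd_S=1$; right exactness is then supplied by a recent surjectivity theorem of Abe (Theorem~\ref{thm:surjective}), whose hypothesis is precisely the local freeness just arranged. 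The long exact $\Ext$-sequence then gives $\pd(\A(G))\le 1$ from $\pd(\A(G'))\le 1$ (Lemma~\ref{lem:SeqBRightExactPd1}). So the paper's inductive step needs only local freeness in codimension~$3$, not global freeness of the restriction --- and that is what the ``no induced $C_4$ through $e$'' condition delivers, whereas the two-pair condition does not.
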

	
	Along the way towards the preceding theorem, we will prove the following key result, yielding the more difficult implication
	of \Cref{thm:main}.
	
	\begin{theorem}
		\label{thm:AntiholePD2}
		For $\ell\geq 6$, the projective dimension of $D(\A(C_\ell^C))$ is equal to $2$, 
		where $C_\ell^C$ is the complement of the cycle-graph with $\ell$ vertices, also called the ($\ell$-)antihole.
	\end{theorem}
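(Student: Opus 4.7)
My plan is to prove $\pd D(\A(C_\ell^C)) = 2$ for $\ell \ge 6$ by separately establishing the upper and lower bounds. The guiding observation is that $\A(C_\ell^C)$ is the subarrangement of the free braid arrangement $\A(K_\ell)$ obtained by removing the $\ell$ cycle hyperplanes $\ker(x_i - x_{i+1})$ (indices taken cyclically), so the combinatorial difference between $\A(C_\ell^C)$ and the free case is a very sparse and symmetric set of hyperplanes.

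For the upper bound $\pd D(\A(C_\ell^C)) \le 2$, I would start from $\A(K_\ell)$, which is free of projective dimension $0$, and iteratively delete the $\ell$ cycle hyperplanes, tracking the projective dimension at each step via an addition--deletion theorem for projective dimension that generalizes Terao's classical freeness theorem. A naive application of such a theorem only gives a bound of order $\ell$; the crux is to exhibit a deletion order under which the restriction arrangement at each intermediate step is a graphic arrangement of an edge-contracted graph whose derivation module already has controlled projective dimension (ideally coming from a chordal or inductively manageable structure). The base case $\ell = 6$, where $C_6^C$ is the triangular prism, can be verified directly by computer algebra.

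For the lower bound $\pd D(\A(C_\ell^C)) \ge 2$, I would use the Auslander--Buchsbaum formula, which reduces the task to showing $\operatorname{depth} D(\A(C_\ell^C)) \le \ell - 2$, or equivalently producing a non-vanishing class in $\Ext^2_R(D(\A(C_\ell^C)), R)$. The combinatorial source of the obstruction is the chordless $\ell$-cycle present in the complement of $C_\ell^C$. The plan is to translate this into an explicit algebraic witness: identify a first syzygy of $D(\A(C_\ell^C))$ that is manufactured from a telescoping sum around the missing cycle and show it cannot be written as an $R$-linear combination of the trivial Koszul-type relations among the generators, forcing a non-zero second graded Betti number.

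The principal obstacle on both sides is the detailed control of the module structure. For the upper bound, the restriction arrangements can leave the antihole class during iterated deletion, so the choice of deletion order must be combinatorially engineered and accompanied by a freeness or small-projective-dimension statement for each restricted graphic arrangement. For the lower bound, finding an explicit and uniform witness for the non-triviality of the second syzygy is the main difficulty; this likely requires a direct calculation for small $\ell$ combined with a localization or induction argument that propagates the obstruction from $C_6^C$ to $C_\ell^C$ for all $\ell \ge 6$.
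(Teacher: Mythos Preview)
Your proposal is a strategy outline rather than a proof, and it contains a genuine obstruction on the lower-bound side that would make the plan fail as stated.

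The localization/induction idea for propagating $\pd \ge 2$ from $C_6^C$ to $C_\ell^C$ cannot work. For $\ell \ge 7$, no proper induced subgraph of $C_\ell^C$ is an antihole: an induced subgraph of $C_\ell^C$ on a vertex set $W$ would be $C_m^C$ exactly when the induced subgraph of $C_\ell$ on $W$ is $C_m$, but $C_\ell$ has no proper induced cycle. In fact every proper induced subgraph of $C_\ell^C$ is weakly chordal, so by the direction of \Cref{thm:main} already established in \Cref{sec:WeaklyChordal} every proper localization of $\A(C_\ell^C)$ has projective dimension at most $1$. Hence \Cref{prop:LocPD} gives you nothing beyond $\pd \ge 0$, and the obstruction to $\pd \le 1$ is genuinely global. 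Any proof of the lower bound must therefore produce a uniform witness for every $\ell$, not reduce to a finite base case.

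The paper's route is quite different from either of your two halves: it does not separate upper and lower bounds at all, but instead constructs an explicit minimal free resolution of $D(\A(C_\ell^C))$ of length exactly $2$. Starting from Saito's basis of $D(\A_{\ell-1})$, it deletes the hyperplanes $H_{i,i+1}$ one at a time and tracks generators via Terao's polynomial $B$, introducing a refined version (\Cref{thm:idealB}) to handle the final deletion where the degree count breaks down. This yields explicit generators $\theta_0,\dots,\theta_{\ell-3},\varphi_1,\dots,\varphi_\ell$, then all first syzygies among them, and finally a single second syzygy which --- and here your instinct was right --- is precisely a telescoping relation $\sum_i (x_i-x_{i+1})\psi_i = 0$ around the missing cycle. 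The minimality of this length-$2$ resolution gives both bounds simultaneously. Your ``telescoping sum'' intuition is exactly the content of the second syzygy; what is missing from your plan is the explicit generator and first-syzygy computation that makes this syzygy demonstrably nontrivial.
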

	
	Moreover, we prove a refined result. Namely, in Theorem \ref{thm:antihole} we provide an explicit minimal free resolution
	of $D(\A(C_\ell^C))$.
	
	\bigskip
	
	The article is organized as follows.
	In \Cref{sec:PreliminariesGraphs}, we introduce some notation for graphs and preliminary results needed later on.
	\Cref{sec:Arrangements} is concerned with further notation and helpful results for hyperplane arrangements and their derivation modules.
	Moreover, in Subsection \ref{ssec:PolyB} we record a new tool from the very recent work of the first author \autocite{Abe23_BSequence} 
	which allows us to control the projective dimension of the derivation module along the deletion of hyperplanes under certain assumptions.
	Then, in \Cref{sec:WeaklyChordal} we prove one direction of our main Theorem \ref{thm:main}.
	The \Cref{sec:PDAntihole} then yields, step by step, the other direction of Theorem \ref{thm:main}.
	In particular, along the way, we derive a minimal free resolution of the derivation module of an antihole graphic arrangement.
	To conclude, in the final \Cref{sec:OpenProblems} we comment on open ends and record some questions 
    raised by our investigations.
	
	\section*{Acknowledgments}
	TA 
	is partially supported by JSPS KAKENHI Grant Number JP21H00975.
	LK and LM are supported by the Deutsche Forschungsgemeinschaft (DFG, German Research Foundation) -- SFB-TRR 358/1 2023 -- 491392403.
    PM is supported by a JSPS Postdoctoral Fellowship for Research in Japan.
	
	\section{Preliminaries -- Graph Theory} 
	\label{sec:PreliminariesGraphs}
	In this section, we define objects of interest to us while studying graphic arrangements, notably specific graph classes and their attributes.
	The exposition is mostly based on \autocite{Diestel}.
	We only consider simple, undirected graphs:
	\begin{definition}
		\begin{enumerate} 
			\item[(i)] A simple graph $G$ on a set $\V$ is a tuple $(\V,E)$ with $E \subseteq \binom{\V}{2}$
			the set of (undirected) edges connecting the vertices in $\V$. 
			\item[(ii)] The graph $G^C = \left(\V, \binom{\V}{2}\backslash E\right)$ is called the \emph{complement graph} of $G$. 
			\item[(iii)] A graph $G' = (\V', E')$ with $\V' \subseteq \V, E' \subseteq E$ is called a \emph{subgraph} of $G$. 
			If $E'$ is the set of all edges between vertices in $\V'$, i.e.\ $E'= \binom{\V'}{2}\cap E$, the graph $G'$ is an \emph{induced subgraph} of $G$. 
		\end{enumerate} 
	\end{definition} 
	If the subset relation is proper, $G'$ is called a \emph{proper subgraph} of $G$.
	
	Besides restricting the graph to a set of vertices, there are two basic operations we can perform on graphs, as described in \autocite{OrlikTerao}: 
	\begin{definition}\label{adddel}
		Let $G = (\V,E)$ be a graph and $e = \{i,j\}\in E$. 
		\begin{enumerate}
			\item The graph $G' 
   = (\V, E\backslash\{e\})$ is obtained from $G$ through deletion of $e$.
			\item The graph $G'' = (\V'', E'')$ with $V''$ the vertex set obtained by identifying
			$i$ and $j$ and $E'' = \{\{\bar{p}, \bar{q}\}~\vert \{p,q\}\in E'\}$ is obtained by contraction of $G$ with respect to~$e$.  
		\end{enumerate} 
	\end{definition} 
	We will define graph classes based on certain path or cycle properties: 
	\begin{definition}
		\begin{enumerate}[(i)]
			\item For $k\ge 2$, a path of length $k$ is the graph $P_k = (\V,E)$ of the form \[\V = \{v_0,\dots,v_k\}~,~E = \{\{v_0,v_1\}, \{v_1,v_2\},\dots,\{v_{k-1},v_k\}\}\] where all $v_i$ are distinct.  
			\item If $P_k = (\V,E)$ is a path, and $k \geq 3$, then the graph $C_k = (\V, E\cup \{v_{k-1},v_0\})$ is called a ($k$-)cycle.
		\end{enumerate}
	\end{definition}
	An edge which joins two vertices of a cycle (path), but is not itself an edge of the cycle (path) is a \emph{chord} of that cycle (path). 
	An induced cycle (path) of a graph $G$ is an induced subgraph of $G$, that is a cycle (path).
	For $k\geq 6$, we call $C_k^C$ the $k$-\emph{antihole}.
	\begin{definition} 
		A graph is called \emph{chordal} (or \emph{triangulated}) if each of its cycles of length at least 4 has a chord, i.e.\ if it contains no induced cycles of length greater than 3. 
	\end{definition} 
	The main objects of interest in this article are graphs that satisfy a weaker condition than chordality and were introduced by Hayward in \autocite{Hayward1}: 
	\begin{definition}
		A graph is called \emph{weakly chordal} (or \emph{weakly triangulated}) if it contains no induced $k$-cycle with $k \geq 5$ and no complement of such a cycle as an induced subgraph. 
	\end{definition} 
	
	It is clear that chordality implies weak chordality and that weak chordality is closed under taking the complement. 
	Additionally, it is apparent that if $G$ is weakly chordal, so is every induced subgraph of $G$ and in \autocite{Hayward2}, it was proved that weak chordality is closed under contraction.
	
	\bigskip
	A more inductive approach is given by the following generation method, introduced by Hayward: 
	\begin{theorem}\emph{(\autocite{Hayward3}, Theorem 4)}\label{thm:wc}  
		A graph is weakly chordal if and only if it can be generated in the following manner: 
		\begin{enumerate} 
			\item Start with a graph $G_0$ with no edges. 
			\item Repeatedly add an edge $e_j$ to $G_{j-1}$ to create the graph $G_j$, such that $e_j$ is not the middle edge of any induced $P_3$ of $G_j$.  
		\end{enumerate} 
	\end{theorem}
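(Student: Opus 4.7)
The plan is to prove both directions.

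For the converse direction (every weakly chordal $G$ can be generated by the process), I would proceed by induction on the number of edges of $G$, constructing the generating sequence in reverse. Given a weakly chordal $G$ with at least one edge, the task is to exhibit an edge $e \in E(G)$ such that (a) $e$ is not the middle edge of any induced $P_3$ in $G$ and (b) $G$ with $e$ removed is again weakly chordal; iterating such removals yields the generating sequence. To produce $e$, invoke the notion of a \emph{2-pair} in a graph $H$: a pair of non-adjacent vertices $\{u, v\}$ all of whose chordless $u$--$v$ paths in $H$ have length exactly two. The classical Hayward--Hoàng--Maffray theorem guarantees that every weakly chordal graph that is not a clique contains a 2-pair. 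Applied to $G^C$, which is weakly chordal and, since $G$ has an edge, not a clique, this produces a 2-pair $\{u, v\}$ of $G^C$; equivalently, an edge $e = \{u, v\}$ of $G$. Translating under complementation on the four vertices involved, the nonexistence of a chordless length-three $u$--$v$ path in $G^C$ is precisely the statement that $e$ is not the middle edge of any induced $P_3$ in $G$, yielding (a). Condition (b) follows from Hayward's preservation lemma, which states that adding the edge between a 2-pair of a weakly chordal graph yields a weakly chordal graph, applied to $G^C$, combined with the closure of weak chordality under complementation.

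For the forward direction, I would argue by induction on the step count. The inductive step assumes $G_{j-1}$ is weakly chordal and $e_j$ is not the middle edge of any induced $P_3$ in $G_j$, and concludes $G_j$ is weakly chordal. Suppose for contradiction $G_j$ contains a forbidden induced subgraph $H$, i.e., an induced $C_k$ or $C_k^C$ with $k \geq 5$; since $G_{j-1}$ is weakly chordal, $H$ must contain the new edge $e_j$. For $H = C_k$, the 4-vertex subpath of $H$ centered on $e_j$ is an induced $P_3$ in $H$ and, because $H$ is induced in $G_j$, an induced $P_3$ of $G_j$, contradicting the hypothesis. For $H = C_k^C$ with $k \geq 6$ (the case $k = 5$ reduces to the cycle case via $C_5 \cong C_5^C$), write $e_j = \{v_a, v_b\}$ with $v_0, \dots, v_{k-1}$ the vertices of $H$ labeled cyclically and let $d$ denote the cyclic distance between $v_a$ and $v_b$. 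The plan is to split into two sub-cases. If $d \in \{2, k-2\}$, delete from $H - e_j$ the unique vertex $v_c$ lying between $v_a$ and $v_b$ on the short arc; a direct check shows that the remaining $k-1$ vertices induce a copy of $C_{k-1}^C$ inside $G_{j-1}$, which is a forbidden subgraph for $k - 1 \geq 5$ and thus contradicts the weak chordality of $G_{j-1}$. Otherwise $3 \leq d \leq k-3$, and the plan is to select cyclic neighbors $x$ of $v_a$ and $y$ of $v_b$ in $H$ on opposite sides of $e_j$ so that $\{x, y\}$ is a non-edge of $H$, producing an induced $P_3$ of $H$ centered on $e_j$ and contradicting the hypothesis on $e_j$.

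The main obstacle is the antihole case in the forward direction, particularly the short-chord sub-case $d \in \{2, k-2\}$: as one checks directly on $C_6^C$, an edge such as $\{v_0, v_2\}$ is not the middle edge of any induced $P_3$ inside $H$, so the argument cannot stay within $H$ and instead relies on the structural reduction to a smaller antihole in $G_{j-1}$. A secondary subtlety in the converse direction is that only the length-three case of the 2-pair definition is needed to establish (a), while the full 2-pair property is essential for (b) via the preservation lemma; invoking the Hayward--Hoàng--Maffray theorem together with the preservation lemma is therefore essential and cannot be bypassed by a more elementary existence argument.
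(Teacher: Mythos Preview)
The paper does not supply its own proof of this statement; it is quoted from Hayward's work and used as a black box (note also the slip in the paper between $P_3$ here and $P_4$ in the proof of \Cref{lem:wc}; both are meant to denote the $4$-vertex path). Your converse direction is correct and is essentially Hayward's original argument via the Hayward--Ho\`ang--Maffray $2$-pair theorem together with the preservation lemma.

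Your forward direction, however, has a genuine gap in the antihole sub-case. When $H = C_k^C$ with $k \geq 8$ and $e_j = \{v_a, v_b\}$ has cyclic distance $d$ with $4 \leq d \leq k-4$, your plan to locate an induced $4$-vertex path inside $H$ with middle edge $e_j$ cannot succeed. Indeed, for $x\,v_a\,v_b\,y$ to be induced in $H$ one needs $\{x,v_b\}$, $\{v_a,y\}$, $\{x,y\}$ all to be non-edges of $H$, i.e.\ all three to be edges of $C_k$; but if $x$ is a cycle-neighbour of $v_b$ and $y$ a cycle-neighbour of $v_a$, then $x$ and $y$ are at cycle-distance at least $\min(d,k-d)-2 \geq 2$, so $\{x,y\}$ is never a cycle edge. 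Concretely, in $C_8^C$ the diagonal $\{v_0,v_4\}$ is not the middle edge of any induced $4$-vertex path. The remedy is to extend the structural reduction you already use for $d=2$: restricting $H - e_j$ to the $d{+}1$ vertices $v_a, v_{a+1}, \ldots, v_b$ on one arc gives an induced copy of $C_{d+1}^C$ inside $G_{j-1}$ (the non-edges on this vertex set are exactly the consecutive pairs together with the deleted $\{v_a,v_b\}$), and this is a forbidden subgraph once $d \geq 4$. Together with your $P_3$-argument for $d \in \{3,k-3\}$ and your reduction for $d \in \{2,k-2\}$, this closes the case analysis.
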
 
	With these tools, we can now prove the following: 
	\begin{lemma}\label{lem:wc}
		For a weakly chordal graph $G = (\V,E)$, there exists a sequence of edges $e_1,..,e_k\notin E$, such that 
		\begin{enumerate}
			\item $G_i = (\V, E \cup \{e_1,\dots,e_i\})$ is weakly chordal for $i = 1,\dots,k-1$,
			\item the edge $e_i$ is not part of an induced cycle $C_4$ in $G_i$ for $i = 1,\dots,k$ and
			\item $G_k$ is chordal.
		\end{enumerate}
	\end{lemma}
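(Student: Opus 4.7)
The plan is to apply Hayward's characterization (\Cref{thm:wc}) not directly to $G$ but to its complement $G^C$, and then reverse the resulting edge ordering. Since weak chordality is preserved under complementation, $G^C$ is weakly chordal; thus \Cref{thm:wc} yields an enumeration $f_1, f_2, \ldots, f_n$ of $E(G^C)$ with the property that, setting $H_j := (\V, \{f_1, \ldots, f_j\})$, each $H_j$ is weakly chordal and each $f_j$ is not the middle edge of any induced $P_3$ of $H_j$. I then define $e_i := f_{n-i+1}$ and $G_i := (\V, E \cup \{e_1, \ldots, e_i\})$. A direct check shows that $G_i$ is exactly the complement of $H_{n-i}$ on~$\V$.

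Conditions~(1) and~(3) are then essentially free. Each $H_{n-i}$ is weakly chordal by Hayward's construction, and weak chordality is closed under complementation, giving~(1). For~(3) one may simply take $k := n$, so that $G_k$ is the complete graph on $\V$, which is chordal; alternatively, the sequence can be truncated at the first index where $G_i$ becomes chordal.

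The main content is condition~(2), which I verify by a complement translation. Write $e_i = \{u,v\}$ and suppose for contradiction that $e_i$ lies on an induced $4$-cycle of $G_i$, say $u{-}a{-}b{-}v{-}u$. Then $\{u,a\}, \{a,b\}, \{b,v\}, \{u,v\}$ are edges of $G_i$ while $\{u,b\}$ and $\{a,v\}$ are non-edges. Passing to $H_{n-i} = G_i^C$, the induced subgraph on $\{u,v,a,b\}$ contains exactly the two disjoint edges $\{u,b\}$ and $\{a,v\}$ and no others, i.e., it is a $2K_2$ in which $\{u,v\}$ is a non-edge. Adding back $e_i = f_{n-i+1} = \{u,v\}$ to form $H_{n-i+1}$ therefore produces the induced path $b{-}u{-}v{-}a$, an induced $P_3$ whose middle edge is precisely $f_{n-i+1}$. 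This contradicts the defining property of the Hayward enumeration, so no such $C_4$ can exist. The only real work in the proof is this complementation bookkeeping; no serious obstacle is anticipated.
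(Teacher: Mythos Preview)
Your proof is correct and follows essentially the same route as the paper: apply \Cref{thm:wc} to the complement $G^C$, reverse the resulting edge ordering, and use closure of weak chordality under complementation together with the observation that the middle edge of an induced $P_3$ corresponds under complementation to an edge on an induced $C_4$. Your treatment of condition~(2) is in fact more explicit than the paper's, which simply asserts the $P_3$--$C_4$ complement correspondence without writing out the four-vertex bookkeeping.
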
 
	\begin{proof}
		Say the complement $G^C$ has $m$ edges.
		If $G$ is weakly chordal, so is $G^C$. Using \Cref{thm:wc} 
        this means in turn that there exists an edge ordering $e_m,\dots,e_1$ of the edges in $E_{G^C}$, such that $(\V, \{e_m,\dots,e_i\})$ is weakly chordal for all $i=m,\dots,1$ and $(\V, \{e_m,\dots,e_1\})=G^C$.
		
		Define the sequence of graphs $G_i \coloneqq (\V, E \cup \{e_1,\dots,e_i\})$ for $i=1,\dots,m$.
		As $G_i^C=(\V, \{e_m,\dots,e_{i-1}\})$ these graphs are by construction all weakly chordal.
		Since the sequence ends with the complete graph, which is chordal, the chordality condition is met at some point in the sequence.
		Moreover, the middle edge in an induced path $P_4$ becomes an edge on an induced cycle $C_4$ in the complement graph.
		Thus the condition of~\Cref{thm:wc} on avoiding the middle edges of an induced $P_4$ translates to avoiding the edges of an induced cycle $C_4$ as claimed.
	\end{proof}

	
	\section{Preliminaries -- Hyperplane Arrangements} 
	\label{sec:Arrangements}
	
	In this section, we recall some fundamental notions form the theory of hyperplane arrangements.
	The standard reference is Orlik and Terao's book \autocite{OrlikTerao}.
	
	\begin{definition} 
		Let $\KK$ be a field and let $V \cong \KK^\ell$  be a $\KK$-vector space of dimension $\ell$. 
		A hyperplane $H$ in $V$ is a linear subspace of dimension $\ell-1$. 
		A hyperplane arrangement $\A = (\A, V)$  is a finite set of hyperplanes in $V$.
	\end{definition} 
	
	Let $V^*$ be the dual space of $V$ and $S = S(V^*)$ be the symmetric algebra of $V^*$. 
	Identify $S$ with the polynomial algebra $S = \KK[x_1,\dots,x_\ell]$. 
	\begin{definition} 
		Let $\A$ be a hyperplane arrangement. Each hyperplane $H \in \A$ is the kernel of a polynomial $\alpha_H$ of degree 1 defined up to a constant. 
		The product \[Q(\A) \coloneqq \prod_{H \in \A} \alpha_H\] is called a defining polynomial of $\A$. 
	\end{definition}  
	
	Define the \emph{rank} of $\A$ as $\rk(\A) := \codim_V(\cap_{H \in \A}H)$.
	If $\mathcal{B} \subseteq \A$ is a subset, then $(\mathcal{B}, V)$ is called a subarrangement. 
	The \emph{intersection lattice} $L(\A)$ of the arrangement is the set of all non-empty intersections of elements of $\A$ (including $V$ as the intersection over the empty set), with partial order by reverse inclusion. 
    For $X\in L(\A)$ define the \emph{localization} at $X$ as the subarrangement $\A_X$ of $\A$ by
	\[\A_X \coloneqq \{H \in \A~\vert~X \subseteq H\}\] as well as the \emph{restriction} $(\A^X, X)$ as an arrangement in $X$ by
	\[\A^X \coloneqq \{X \cap H~\vert~H \in \A\backslash\A_X~\text{and}~X\cap H \not= \emptyset\}.\]
	Define \[L_k(\A) \coloneqq \{X \in L(\A)~\vert~\text{codim}_V (X) = k\} \] and $L_{\geq k}(\A), L_{\leq k}(\A)$ analogously. 
	
	\begin{definition} 
		Let $\A$ be a non-empty arrangement and let $H_0 \in \A$. Let $\mathcal{A'} = \A\backslash\{H_0\}$ and let $\mathcal{A''} = \A^{H_0}$. We call $(\A, \mathcal{A'}, \mathcal{A''})$ a triple of arrangements with distinguished hyperplane $H_0$. 
	\end{definition} 
	
	We can associate a special module to the hyperplane arrangement $\A$: 
	\begin{definition} 
		A $\mathbb{K}$-linear map $\theta: S \rightarrow S$ is a derivation if for $f,g \in S$:
		\[\theta(f\cdot g) = f\cdot \theta(g)+g\cdot \theta(f).\]
		Let $\Der_{\mathbb{K}}(S)$ be the $S$-module of derivations of $S$.
		This is a free $S$-module with basis the usual partial derivatives $\ddxi{1},\dots,\ddxi{\ell}$.
		
		Define an $S$-submodule of $\Der_{\mathbb{K}}(S)$, called the module of $\A$-derivations, by
		\[D(\A) \coloneqq \{\theta \in \Der_{\mathbb{K}}(S) \vert \theta(Q) \in QS\}.\]
		The arrangement $\A$ is called free if $D(\A)$ is a free $S$-module. 
	\end{definition} 
	
	The class of arrangements we are interested in are graphic arrangements: 
	\begin{definition} 
		Given a graph $G = (\V,E)$ with $\V=\{1,\dots,\ell\}$, define an arrangement $\A(G)$ by \[\A(G) \coloneqq \{\ker(x_i-x_j) \vert \{i,j\}\in E\}.\]
	\end{definition} 
	
	\begin{remark}
		Note that for a graphic arrangement $\A(G)$, localizations exactly correspond
		to disconnected unions of induced subgraphs of $G$. More precisely,
		for $X \in L(\A(G))$ we have
		$\A(G)_X = \{\ker(x_i-x_j) \mid \{i,j\} \in E'\}$ for some $E' \subseteq E$
		if and only if there is a subgraph $G'$ of $G$ with edges $E'$
		such that each connected component of $G'$ is an induced subgraph of $G$.
	\end{remark}
	
	\medskip
	
	For given derivations $\theta_1, \ldots, \theta_\ell \in \Der(S)$ we define the the \emph{coefficient matrix}
	\[
	M(\theta_1, \ldots, \theta_\ell) := \left(\theta_j(x_i)\right)_{1\leq i,j \leq \ell},
	\]
	i.e.,\ the matrix of coefficients with respect to the standard basis
	$\ddxi{1},\ldots,\ddxi{\ell}$
	of $\Der(S)$.
	We recall Saito's useful criterion for the freeness of $D(\A)$, cf.~\autocite[Thm.~4.19]{OrlikTerao}.
	
	\begin{theorem}
		\label{thm:SaitosCriterion}
		For $\theta_1, \dots, \theta_\ell \in D(\A)$, the following are equivalent:
		\begin{enumerate}[(1)]
			\item 
			$\det(M(\theta_1, \ldots, \theta_\ell)) \in  \KK^\times\, Q(\A)$,
			
			\item
			$\theta_1, \dots, \theta_\ell$ is a basis of $D(\A)$.
		\end{enumerate}
	\end{theorem}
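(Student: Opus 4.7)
The plan is to reduce the theorem to one divisibility fact about coefficient matrices of logarithmic derivations, and then deploy it in both directions---first via Cramer's rule, then via a height-one local analysis. The core lemma to establish is: for any tuple $(\theta_1,\dots,\theta_\ell)$ in $D(\A)$, one has $Q(\A) \mid \det M(\theta_1,\dots,\theta_\ell)$ in $S$. Fixing $H \in \A$ and writing $\alpha_H = \sum_{i=1}^\ell a_i x_i$, the containment $\theta_j(\alpha_H) \in \alpha_H S$ reads $\sum_i a_i M_{ij} \equiv 0 \pmod{\alpha_H}$ for every $j$, so the non-zero row vector $(a_1,\dots,a_\ell)$ witnesses a left linear dependence of $M$ modulo $\alpha_H$; hence $\det M \equiv 0 \pmod{\alpha_H}$. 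Since the $\alpha_H$ are pairwise non-associate irreducibles in the UFD $S$, their product $Q(\A)$ divides $\det M$.

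For the implication $(1) \Rightarrow (2)$, suppose $\det M = c\, Q(\A)$ with $c \in \KK^\times$. Then $\det M \neq 0$, so the $\theta_i$ are $S$-linearly independent. For any $\theta \in D(\A)$, Cramer's rule over the fraction field of $S$ writes $\theta = \sum_i (\det M_i / \det M)\,\theta_i$, where $M_i$ is obtained from $M$ by replacing its $i$-th column with the coordinate vector of $\theta$. Each such $M_i$ is itself the coefficient matrix of an $\ell$-tuple in $D(\A)$, so the core lemma gives $Q(\A) \mid \det M_i$; division by $\det M = c\, Q(\A)$ places each coefficient in $S$, so the $\theta_i$ generate $D(\A)$ and therefore form a basis.

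For the converse $(2) \Rightarrow (1)$, the core lemma gives $\det M = P \cdot Q(\A)$ for some $P \in S$, and $\det M \neq 0$ because $(\theta_i)$ is a basis; the task is to show that $P$ is a non-zero scalar. I would argue by checking the valuation of $\det M$ at every height-one prime of $S$. If $\mathfrak{p}$ contains none of the $\alpha_H$, then each $\alpha_{H'}$ is a unit in $S_\mathfrak{p}$, so $D(\A)_\mathfrak{p} = \Der_\KK(S)_\mathfrak{p}$; both $(\theta_i)_\mathfrak{p}$ and $(\ddxi{i})_\mathfrak{p}$ are bases of this free $S_\mathfrak{p}$-module, and their change-of-basis matrix---namely $M$ itself---has unit determinant, so $\det M \notin \mathfrak{p}$. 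If instead $\mathfrak{p} = (\alpha_H)$, then localization collapses the intersection to $D(\A)_\mathfrak{p} = D(\{H\})_\mathfrak{p}$; after a linear change of coordinates making $\alpha_H = x_1$, the tuple $x_1\ddxi{1},\ddxi{2},\dots,\ddxi{\ell}$ is a basis of $D(\{H\})$ with diagonal coefficient matrix and determinant $x_1$, so the $\mathfrak{p}$-adic valuation of $\det M$ is exactly one. Combining the two valuation computations with the global divisibility forces $\det M = c\, Q(\A)$ for some $c \in \KK^\times$.

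The main obstacle is the converse implication. The divisibility lemma is essentially tautological from the definition of $D(\A)$, and Cramer's rule handles the forward direction routinely. By contrast, $(2) \Rightarrow (1)$ requires the correct structural statement about $D(\A)_\mathfrak{p}$ at the generic point of each hyperplane---namely, that localization at such a prime reduces the arrangement to the single-hyperplane case---and this is where genuine properties of logarithmic derivation modules, beyond pure linear algebra, must be invoked.
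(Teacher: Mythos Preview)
Your argument is correct and is essentially the classical proof of Saito's criterion: the divisibility lemma $Q(\A)\mid \det M$ follows from the row-dependence modulo each $\alpha_H$, Cramer's rule gives $(1)\Rightarrow(2)$, and the height-one local analysis gives $(2)\Rightarrow(1)$. Note, however, that the paper does not supply its own proof of this statement; it is quoted as a known result with a reference to \autocite[Thm.~4.19]{OrlikTerao}, and the argument you wrote is precisely the one given there.
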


	
	\subsection{Projective dimension}
	\label{ssec:PD}
	In this manuscript, we want to take a look at the non-free case of graphic arrangements and find a characterization for their different projective dimensions.
	For a comprehensive account of all the required homological and commutative algebra notions we refer to \autocite{Weibel}
	respectively \autocite{Eis95_CommAlg}.
	
	\begin{definition} 
		An $S$-module $P$ is called projective if it satisfies the following universal lifting property: 
		given $S$-modules $L,N$, a surjection $g: L \rightarrow N$ and a map $\gamma: P \rightarrow N$, 
		there exists a map $\beta: P \rightarrow N$, such that $\gamma= g \circ \beta$.
		
		A projective resolution of a module $M$ is a complex $P_\bullet$ with a map $\epsilon: P_0 \rightarrow M$, such that the augmented complex 
		\[\dots \rightarrow P_2 \rightarrow P_1 \rightarrow P_0 \xrightarrow{\epsilon} M \rightarrow 0\] is exact and $P_i$ is projective for all $i\in \mathbb{N}$.
	\end{definition} 
	\begin{lemma} 
		Every $S$-module $M$ has a projective resolution. 
	\end{lemma}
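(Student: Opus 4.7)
The plan is to build a resolution by free modules, exploiting the fact that every free $S$-module is projective, so a free resolution is in particular a projective one.

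First I would recall (or verify) the standard fact that every free $S$-module $F = \bigoplus_{i \in I} S$ is projective: given a surjection $g\colon L \to N$ and a map $\gamma\colon F \to N$, choose for each basis element $e_i$ a preimage $\ell_i \in L$ with $g(\ell_i) = \gamma(e_i)$ and extend $S$-linearly to define the required lift $\beta\colon F \to L$. Thus it suffices to construct a \emph{free} resolution of $M$.

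Next I would produce a surjection from a free module onto $M$. Choose any generating set $\{m_\lambda\}_{\lambda \in \Lambda}$ of $M$ (for instance $\Lambda = M$ itself) and let $F_0 := \bigoplus_{\lambda \in \Lambda} S$ with basis $\{e_\lambda\}$. The $S$-linear map $\epsilon\colon F_0 \to M$ defined by $e_\lambda \mapsto m_\lambda$ is surjective. Set $K_0 := \ker(\epsilon)$, which is again an $S$-module.

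Now I would iterate. Suppose inductively that we have constructed an exact sequence
\[
F_n \xrightarrow{d_n} F_{n-1} \xrightarrow{d_{n-1}} \cdots \xrightarrow{d_1} F_0 \xrightarrow{\epsilon} M \to 0
\]
with each $F_i$ free. Let $K_n := \ker(d_n)$ (or $K_0 := \ker(\epsilon)$ for $n=0$). Applying the previous step to the $S$-module $K_n$ yields a free module $F_{n+1}$ together with a surjection $\pi_{n+1}\colon F_{n+1} \twoheadrightarrow K_n$. Composing with the inclusion $K_n \hookrightarrow F_n$ gives a map $d_{n+1}\colon F_{n+1} \to F_n$ whose image is exactly $K_n = \ker(d_n)$, thereby extending exactness by one step. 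Iterating produces the complex $\cdots \to F_2 \to F_1 \to F_0 \xrightarrow{\epsilon} M \to 0$, which is exact by construction and whose terms are free, hence projective. This is the desired projective resolution, and there is no genuine obstacle — the only mild subtlety is simply picking a generating set at each stage, which is always possible (and can even be taken to be finite, with finitely generated free modules, when $M$ is finitely generated and $S$ is Noetherian, as is the case throughout the rest of the paper).
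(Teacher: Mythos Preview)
Your argument is correct: constructing a free resolution by iteratively covering kernels with free modules is the standard proof, and free modules are indeed projective for the reason you give. The paper itself does not supply a proof of this lemma at all --- it is simply stated as a well-known fact (essentially \autocite[Lemma~2.2.5]{Weibel} or any standard reference) and then used to justify the subsequent definition of projective dimension --- so there is nothing substantive to compare your approach against.
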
 
	
	With this in mind, we can define the notion of projective dimension: 
	\begin{definition}
		Let $M$ be an $S$-module. Its \emph{projective dimension} $\pd(M)$ is the minimum integer $n$ (if it exists), 
		such that there is a resolution of $M$ by projective $S$-modules 
		\[0 \rightarrow P_n \rightarrow \dots\rightarrow P_1 \rightarrow P_0 \rightarrow M \rightarrow 0\] 
	\end{definition} 
	
	\begin{remark}
		\label{rem:PDExt}
		The projectivity of the $S$-module $P$ is equivalent to the exactness of the functor $\Hom_S(P,-)$.
		Hence, considering its derived functors $\Ext_S^i(M,-)$, we have the following characterization of the projective dimension,
		cf.\ \autocite[pd Lemma 4.1.6]{Weibel}:
		\[
		\pd(M) \leq p \iff \Ext_S^i(M,N) = 0 \text{ for all } i>p \text{ and all }S\text{-modules } N.
		\]
	\end{remark}
	
	The projective dimension of an arrangement is the projective dimension of its derivation module
	and we simply write $\pd(\A) := \pd(D(\A))$. 
	Note that $D(\A)$ is a finitely generated reflexive module over the polynomial ring $S$;
	as such we have $\pd(\A) \leq \rk(\A)-2$ and (as a consequence of the graded version of Nakayama's Lemma) $D(\A)$ is projective if and only it is free,
	cf.\ \autocite[Thm.~19.2]{Eis95_CommAlg}.
	Thus, by \Cref{thm:ChordalFree}, a chordal graph produces an arrangement of projective dimension $0$.
	
	The following result is due to Terao, cf.\ \autocite[Lem.~2.1]{Yuz91_LatticeCohom}.
	\begin{proposition}
		\label{prop:LocPD}
		Let $X \in L(\A)$. Then $\pd(\A_X) \leq \pd(\A)$.
	\end{proposition}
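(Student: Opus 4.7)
The plan is to exploit the behavior of the derivation module under localization at the prime ideal $\mathfrak{p}_X \subset S$ defining the flat $X \in L(\A)$.

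I would first establish the local identification $D(\A)_{\mathfrak{p}_X} = D(\A_X)_{\mathfrak{p}_X}$ as submodules of $\Der_\KK(S)_{\mathfrak{p}_X}$. Writing
$$D(\A) = \bigcap_{H \in \A}\{\theta \in \Der_\KK(S) : \theta(\alpha_H) \in \alpha_H S\},$$
the key observation is that for every $H \in \A \setminus \A_X$ the defining form $\alpha_H$ does not vanish on $X$ and hence becomes a unit in $S_{\mathfrak{p}_X}$. For such $H$, the condition $\theta(\alpha_H) \in \alpha_H S_{\mathfrak{p}_X}$ is automatic, so after localization only the conditions coming from hyperplanes in $\A_X$ survive, giving the claimed equality.

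Next, since localization is an exact functor that preserves projectivity, projective dimension does not increase under localization: $\pd_{S_{\mathfrak{p}_X}}(M_{\mathfrak{p}_X}) \leq \pd_S(M)$ for any finitely generated $S$-module $M$. Applying this to $M = D(\A)$ and combining with the local identification yields
$$\pd_{S_{\mathfrak{p}_X}}\!\left(D(\A_X)_{\mathfrak{p}_X}\right) \leq \pd(\A).$$

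Finally, I would transfer this local bound back to the global projective dimension of $D(\A_X)$. Choosing coordinates so that $\mathfrak{p}_X = (x_1,\ldots,x_k)$ with $k = \codim_V X$, every hyperplane of $\A_X$ is cut out by a linear form in $x_1,\ldots,x_k$, so $\A_X$ is the pullback of an essential arrangement $\bar\A_X$ on $V/X$. Correspondingly, $D(\A_X) \cong \bigl(D(\bar\A_X) \otimes_{\bar S} S\bigr) \oplus \bigoplus_{i>k} S\,\ddxi{i}$ with $\bar S = \KK[x_1,\ldots,x_k]$. The free summand does not affect projective dimension, and faithful flatness of $\bar S \hookrightarrow S$ gives $\pd_S(D(\A_X)) = \pd_{\bar S}(D(\bar\A_X))$. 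By the graded Nakayama lemma, the right-hand side equals $\pd_{\bar S_{\bar{\mathfrak m}}}(D(\bar\A_X)_{\bar{\mathfrak m}})$, and a routine flat base-change argument identifies this with $\pd_{S_{\mathfrak{p}_X}}(D(\A_X)_{\mathfrak{p}_X})$. Chaining the equalities with the displayed inequality gives $\pd(\A_X) \leq \pd(\A)$.

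The main obstacle is arranging the first step cleanly; once the local identification $D(\A)_{\mathfrak{p}_X} = D(\A_X)_{\mathfrak{p}_X}$ is in hand, everything else is standard homological bookkeeping. The transfer in the third step is conceptually minor but requires some care with gradings and flat base change to bridge between the local and global projective dimensions.
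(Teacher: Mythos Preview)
The paper does not give its own proof of this proposition; it simply attributes the result to Terao and cites \autocite[Lem.~2.1]{Yuz91_LatticeCohom}. Your argument is correct and is essentially the classical one: the identification $D(\A)_{\mathfrak{p}_X}\cong D(\A_X)_{\mathfrak{p}_X}$ together with the fact that projective dimension of a graded module over a polynomial ring is detected at the relevant homogeneous prime is exactly how Terao's lemma is usually established.

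Two small points worth tightening if you write this up in full. First, in Step~1 you should remark that localization commutes with the finite intersection $D(\A)=\bigcap_{H\in\A}D(H)$ (equivalently, with the kernel of $\Der(S)\to\bigoplus_H S/\alpha_H S$), which is immediate from exactness of localization; only then does ``$\alpha_H$ becomes a unit for $H\notin\A_X$'' dispose of the extra conditions. Second, in Step~3 the equality $\pd_{\bar S_{\bar{\mathfrak m}}}(D(\bar\A_X)_{\bar{\mathfrak m}})=\pd_{S_{\mathfrak{p}_X}}(D(\A_X)_{\mathfrak{p}_X})$ follows cleanly because $\bar S_{\bar{\mathfrak m}}\to S_{\mathfrak{p}_X}$ is a flat local homomorphism, so a minimal free resolution over $\bar S_{\bar{\mathfrak m}}$ tensors up to a minimal free resolution over $S_{\mathfrak{p}_X}$ (the differentials land in $\bar{\mathfrak m}S_{\mathfrak{p}_X}\subseteq \mathfrak{p}_X S_{\mathfrak{p}_X}$). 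With those remarks your sketch is a complete proof.
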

	
	An arrangement $\A$ is \emph{generic}, if $|\A| > \rk(\A)$
	and for all $X \in L(\A) \setminus \{\cap_{H \in \A}H\}$ we have $|\A_X| = \codim_V(X)$.
	The next result, due to Rose and Terao \autocite{RoseTerao1991_FreeResGeneric},
	identifies generic arrangements as those with maximal projective dimension.
	\begin{theorem}
		\label{thm:pdGeneric}
		Let $\A$ be a generic arrangement. Then $\pd(\A) = \rk(\A)-2$.
	\end{theorem}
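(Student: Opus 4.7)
The upper bound $\pd(\A) \leq \rk(\A) - 2$ is already recorded in the paragraph preceding the theorem and follows from the reflexivity of $D(\A)$: the Auslander--Buchsbaum formula combined with the fact that reflexive modules over a regular local ring have depth at least $2$ yields $\pd(D(\A)) \leq \depth(S) - 2 = \ell - 2$. Hence the content of the theorem is the matching lower bound $\pd(\A) \geq \rk(\A) - 2$. After passing to the essential quotient I may assume $\ell := \rk(\A) = \dim V$; by hypothesis $n := |\A| > \ell$.

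By Auslander--Buchsbaum again, $\pd(D(\A)) = \ell - 2$ is equivalent to $\depth(D(\A)) = 2$; since reflexivity already gives $\depth \geq 2$, the remaining task is $\depth(D(\A)) \leq 2$. I would prove this by exhibiting a nonzero class in $\Ext^{\ell-2}_S(D(\A), S)$ (cf.\ \Cref{rem:PDExt}), or equivalently by producing a nonzero class in the local cohomology $H^2_{\mathfrak m}(D(\A))$.

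To produce such a class I would exploit the two defining properties of a generic arrangement: (i) any $\ell$ of the defining linear forms $\alpha_H$ are linearly independent and hence form a regular sequence in $S$; and (ii) every proper localization $\A_X$ is Boolean, so $D(\A_X)$ is free and all non-free behaviour of $D(\A)$ is therefore concentrated at $\mathfrak m$. My plan is to construct an explicit free resolution of $D(\A)$ of length $\ell-2$, starting from the natural inclusion $D(\A) \hookrightarrow \Der(S) = S^\ell$ whose cokernel is controlled by the maps $\theta \mapsto \theta(\alpha_H) \bmod \alpha_H$, and then iterating a triple-of-arrangements exact sequence along deletions $\A \supset \A \setminus \{H\}$. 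Property (i) makes the successive ``remainder'' terms resolvable by Koszul complexes on subsequences of $\alpha_1,\dots,\alpha_n$, and a direct length count then gives a free complex of length exactly $\ell-2$ resolving $D(\A)$.

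The main obstacle is \emph{minimality} of this complex: one must check that the top differential has no scalar entries, which is precisely equivalent to $\Ext^{\ell-2}_S(D(\A),S) \neq 0$. This is where the hypothesis $n > \ell$ is essential—it supplies at least one genuine linear dependence among the $\alpha_H$'s, and genericity guarantees that every such dependence is as non-degenerate as possible, producing a nonzero top Koszul class. Organising this into an argument that is uniform in $n-\ell$ (rather than proceeding by an awkward induction on it) is the technical heart; once minimality is established, the conclusion $\pd(D(\A)) = \ell - 2$ is immediate.
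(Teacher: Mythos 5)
The paper does not prove this statement at all: it is quoted verbatim from Rose and Terao, so there is no internal argument to compare yours against. Judged on its own terms, your proposal gets the framing right but stops short of a proof. The upper bound via reflexivity (depth at least $2$) plus Auslander--Buchsbaum is correct and is indeed what the paper records in the preceding paragraph; the reduction of the lower bound to $\operatorname{depth}(D(\A)) \le 2$, equivalently $\Ext_S^{\ell-2}(D(\A),S)\neq 0$, is also sound, as is your observation that genericity forces every proper localization to be Boolean and any $\ell$ of the forms $\alpha_H$ to be linearly independent.

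The genuine gap is that the decisive step is only announced, never carried out. Constructing \emph{some} free resolution of length $\ell-2$ proves nothing beyond the upper bound you already have; everything hinges on showing that the top differential has no unit entries, i.e.\ that $\operatorname{Tor}^S_{\ell-2}(D(\A),\KK)\neq 0$, and you explicitly defer this (``the technical heart'') without indicating how the resolution's matrices actually look, why the iterated deletion/Koszul procedure terminates in a complex of length exactly $\ell-2$ rather than something longer that then needs to be trimmed, or why the single linear dependence supplied by $|\A|>\ell$ propagates to a nonzero class in the top spot. This is precisely the content of Rose and Terao's paper, where an explicit (Eagon--Northcott-flavored) resolution is written down and its minimality verified degree by degree; an alternative short route, which you do not pursue, is to exhibit a nonzero socle element of $H^2_{\mathfrak m}(D(\A))$ directly, or to compute the graded Betti number $b_{\ell-2}$ from the known Hilbert series of $D(\A)$ for generic $\A$. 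As written, the proposal is a reasonable plan whose critical nonvanishing claim remains unproven.
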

	
	Important for our present investigations are the following examples of generic arrangements.
	\begin{example}
		\label{ex:CycleGeneric}
		Let $C_\ell$ be the cycle graph with $\ell$ vertices.
		Then, for $\ell\geq 3$, the graphic arrangement $\A(C_\ell)$ is generic.
		In particular, we have $\pd(\A(C_\ell)) = \rk(\A(C_\ell))-2 = \ell-3$.
	\end{example}
	
	Since arrangements of induced subgraphs correspond to localizations, from
	\Cref{ex:CycleGeneric} and \Cref{prop:LocPD} we obtain the following,
	first observed by Kung and Schenck \autocite[Cor.~2.4]{KungSchenck}.
	\begin{corollary}
		\label{coro:InducedCyclePDBound} 
		If $G$ contains an induced cycle of length  $m$, then $\emph{pd}(\A(G)) \geq m-3$.
	\end{corollary}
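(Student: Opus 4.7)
The plan is to derive the bound by combining the two ingredients that have just been set up: the monotonicity of projective dimension under localization (\Cref{prop:LocPD}) and the fact that the cyclic graphic arrangement $\A(C_m)$ is generic of rank $m-1$, hence has projective dimension exactly $m-3$ by \Cref{thm:pdGeneric} and \Cref{ex:CycleGeneric}. The whole argument reduces to identifying $\A(C_m)$ as a localization of $\A(G)$ whenever $C_m$ appears as an induced subgraph of $G$.

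Concretely, let $W = \{v_0,\dots,v_{m-1}\} \subseteq \V$ be the vertex set of such an induced cycle and consider the flat
\[
X \coloneqq \bigcap_{\{i,j\} \in E(C_m)} \ker(x_i - x_j) \in L(\A(G)),
\]
which is the subspace cut out by $x_{v_0} = \cdots = x_{v_{m-1}}$ (all other coordinates remain free). A hyperplane $\ker(x_i - x_j) \in \A(G)$ contains $X$ if and only if both $i$ and $j$ lie in $W$. Here the hypothesis that $C_m$ is an \emph{induced} subgraph is essential: it guarantees that the edges of $G$ with both endpoints in $W$ are precisely the edges of $C_m$, so that $\A(G)_X = \A(C_m)$ (up to the obvious combinatorial identification with an arrangement on $X$).

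Applying \Cref{prop:LocPD} and then \Cref{ex:CycleGeneric} yields
\[
\pd(\A(G)) \;\geq\; \pd(\A(G)_X) \;=\; \pd(\A(C_m)) \;=\; m-3,
\]
as required. The argument is essentially assembly of previously established facts, so no step is genuinely difficult; the only point that requires attention is verifying that the ``induced'' hypothesis on the cycle gives equality (rather than mere containment) $\A(G)_X = \A(C_m)$, which, as noted in the remark following the definition of graphic arrangements, is exactly the content of the correspondence between localizations and induced subgraphs.
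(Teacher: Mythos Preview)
Your proof is correct and follows exactly the approach the paper indicates: it combines \Cref{prop:LocPD} with \Cref{ex:CycleGeneric} via the observation that an induced subgraph yields a localization of the graphic arrangement. The paper states this corollary without further detail, simply citing these two ingredients, so your write-up is a faithful (and slightly more explicit) expansion of the intended argument.
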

	
	In \autocite{KungSchenck}, Kung and Schenck introduced a graph they called the ``triangular prism'' to serve as an example for a graphic arrangement $\A(G)$ whose projective dimension is strictly greater than $k-3$, $k$ the length of the longest chordless cycle in $G$.
	Note that the graph they describe is the $6$-antihole, see~\Cref{fig:prism}.
	It does not have any cycle of length 5 or more, yet $\pd(\A(G))=2$ and it is not weakly chordal. 
	It is the smallest possible example (in terms of the number of vertices) that has this property. 
	\begin{figure}
		\includegraphics[width=0.6\textwidth]{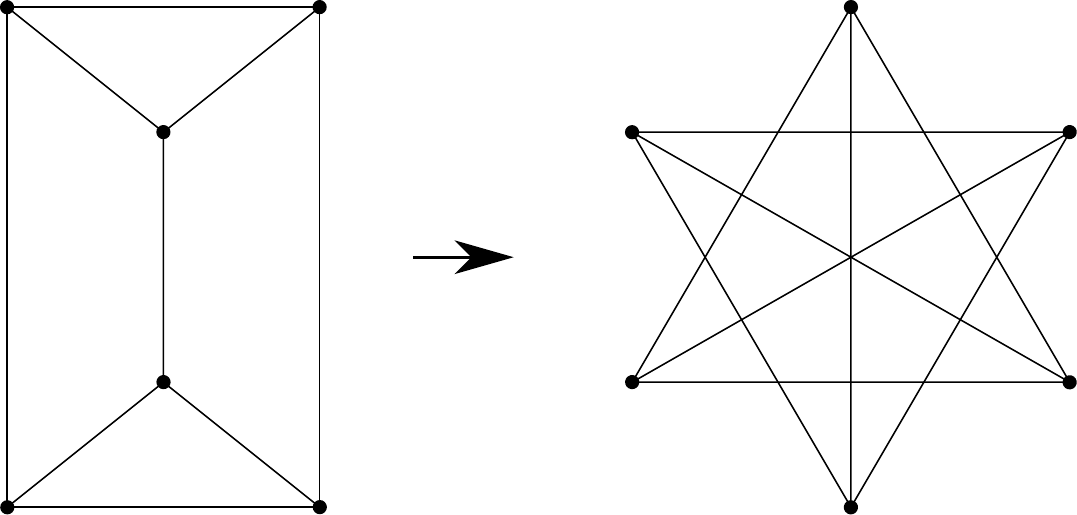} 
		\caption{The triangular prism of~\autocite{KungSchenck} on the left is the same as ${C_6}^C$ on the right.} 
		\label{fig:prism}
	\end{figure}

	
	\subsection{Terao's polynomial \texorpdfstring{$B$}{B}}
	\label{ssec:PolyB}
	Let $\A$ be an arbitrary arrangement and $H_0$ a distinguished hyperplane.
	Let $(\A,\A',\A'')$ be the corresponding triple.
	Choose a map $\nu:\A'' \to \A'$ such that $\nu(X)\cap H_0=X$ for all $X\in \A''$.
	
	Terao defined the following polynomial
	\[
	B(\A',H_0)=\frac{Q(\A)}{\alpha_{H_0}\prod_{X\in\A''}\alpha_{\nu(X)}}.
	\]
	The main properties of this polynomial can be summarized as follows:
	\begin{proposition}\autocite[Lem. 4.39 and Prop. 4.41]{OrlikTerao}
		\begin{enumerate}
			\item $\deg B(\A',H_0) = |\A'|-|\A''|$.
			\item The ideal $(\alpha_{H_0},B(\A',H_0))$ is independent of the choice of $\nu$.
			\item The polynomial $\theta(\alpha_{H_0})$ is contained in the ideal $(\alpha_{H_0},B(\A',H_0))$ for all $\theta\in D(\A')$.
		\end{enumerate}
		\label{prop:B}
	\end{proposition}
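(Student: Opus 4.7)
The three claims have quite different character. For (1), a degree count using $\deg Q(\A) = |\A'|+1$ and $\deg\bigl(\alpha_{H_0}\prod_{X \in \A''}\alpha_{\nu(X)}\bigr) = 1+|\A''|$ immediately yields $\deg B = |\A'|-|\A''|$. For (2), any two admissible maps $\nu,\nu'$ satisfy $\alpha_{\nu'(X)} = c_X\alpha_{\nu(X)} + d_X\alpha_{H_0}$ for scalars $c_X \in \KK^\times$ and $d_X \in \KK$, since the linear forms vanishing on the codimension-$2$ subspace $X$ constitute a $2$-dimensional space spanned by $\alpha_{H_0}$ and $\alpha_{\nu(X)}$. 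Substituting into the defining formula for $B'$ gives $B' \equiv \bigl(\prod_X c_X^{-1}\bigr) B \pmod{\alpha_{H_0}}$, so $(\alpha_{H_0},B) = (\alpha_{H_0},B')$.

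For the substantial claim (3), the plan is to reduce to a local divisibility statement at each $X \in \A''$. Partition $\A' = \bigsqcup_{X \in \A''} \A'_X$ with $\A'_X := \{H \in \A' : X \subseteq H\}$; then $B$ equals, up to a unit, $\prod_{X \in \A''}\prod_{H \in \A'_X \setminus \{\nu(X)\}}\alpha_H$. Writing $\overline{(\cdot)}$ for the image in $S/(\alpha_{H_0})$ and using that $\overline{\alpha_H}$ is a nonzero scalar multiple of $\overline{\alpha_{\nu(X)}}$ for every $H \in \A'_X$, the image of $B$ is, up to a scalar, $\prod_{X \in \A''} \overline{\alpha_{\nu(X)}}^{\,|\A'_X|-1}$. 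Since these linear forms are pairwise coprime in $S/(\alpha_{H_0})$, the desired containment becomes the local statement
\[
\theta(\alpha_{H_0}) \in (\alpha_{H_0},\alpha_{\nu(X)}^{k-1})
\]
in the localization of $S$ at the prime $(\alpha_{H_0},\alpha_{\nu(X)})$, for each $X \in \A''$ with $k := |\A'_X|$.

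To prove this local statement, I would localize at the generic point of $X$: all $\alpha_H$ with $H \not\supseteq X$ become units, so $D(\A')$ coincides locally with the module of logarithmic derivations of $\A'_X$, consisting of the $k$ hyperplanes containing $X$. After factoring out the tangent directions to $X$, this reduces to a central arrangement of $k$ concurrent lines in a $2$-dimensional polynomial ring $\KK[s,t]$ with $s := \alpha_{H_0}$ and $t := \alpha_{\nu(X)}$. Saito's criterion shows that the derivation module of $k$ concurrent lines in the plane is free of rank two, with basis the Euler derivation $\theta_E$ of degree $1$ together with a derivation $\theta_2$ of degree $k-1$. Writing $\theta_2(s) = u(s,t)$ with $u$ homogeneous of degree $k-1$, the unique monomial of $u$ not divisible by $s$ is a scalar multiple of $t^{k-1}$, hence $\theta_2(s) \in (s,t^{k-1})$. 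Consequently every local derivation $\theta = p\theta_E + q\theta_2$ satisfies $\theta(s) = ps + q\theta_2(s) \in (s,t^{k-1})$.

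The main obstacle is the structural reduction in the previous paragraph, that is, correctly identifying the localization of $D(\A')$ at the prime $(\alpha_{H_0},\alpha_{\nu(X)})$ with the derivation module of $k$ concurrent lines in the plane (modulo tangent directions). Once this identification is in place, the required divisibility is forced by the elementary degree observation that a homogeneous polynomial of degree $k-1$ in two variables contains at most one monomial free of $s$, which is necessarily proportional to $t^{k-1}$.
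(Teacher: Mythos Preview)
The paper does not prove this proposition; it is simply quoted from Orlik and Terao's book (Lem.~4.39 and Prop.~4.41) with no argument supplied. So there is no ``paper's own proof'' to compare against, and your proposal should be judged on its own merits.

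Your argument is correct and is essentially the standard proof one finds in Orlik--Terao. Parts (1) and (2) are fine; for (2) the cleanest way to justify the congruence $B' \equiv (\prod_X c_X^{-1})B \pmod{\alpha_{H_0}}$ is to start from the polynomial identity $B\prod_X \alpha_{\nu(X)} = Q(\A') = B'\prod_X \alpha_{\nu'(X)}$, reduce modulo $\alpha_{H_0}$, and cancel the non-zero product $\prod_X \overline{\alpha_{\nu(X)}}$ in the integral domain $\overline{S}$, rather than ``substituting into the defining formula,'' which is a quotient.

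For (3), your reduction is sound: the partition $\A' = \bigsqcup_{X\in\A''}\A'_X$ via $H \mapsto H\cap H_0$ gives $\overline{B} = c\prod_X \overline{\alpha_{\nu(X)}}^{\,|\A'_X|-1}$ with pairwise coprime factors in $\overline{S}$, so it suffices to check divisibility by each power. The identification $D(\A')_{\mathfrak{p}_X} = D(\A'_X)_{\mathfrak{p}_X}$ is correct, and in fact you do not even need to localize: since the hyperplanes of $\A'_X$ all contain the codimension-two flat $X$, one may choose coordinates with $s=\alpha_{H_0}$, $t=\alpha_{\nu(X)}$, and $x_3,\ldots,x_\ell$ coordinates on $X$, so that $D(\A'_X)$ is free with basis $\partial_{x_3},\ldots,\partial_{x_\ell},\theta_E,\theta_2$ where $\theta_2$ has degree $k-1$. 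Your observation that a homogeneous polynomial of degree $k-1$ in $s,t$ lies in $(s,t^{k-1})$ then finishes the local step. This already gives $\theta(\alpha_{H_0})\in(\alpha_{H_0},\alpha_{\nu(X)}^{k-1})$ globally in $S$ for any $\theta\in D(\A')\subseteq D(\A'_X)$, and hence $\overline{\alpha_{\nu(X)}}^{k-1}\mid \overline{\theta(\alpha_{H_0})}$ in $\overline{S}$, as required.
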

	
	In the following, we fix a hyperplane $H_0$ and simply write $B = B(\A',H_0)$ for Terao's polynomial.
	
	By \Cref{prop:B}, we have an exact sequence:
	
	\begin{equation}\label{eq:seq}
		0\rightarrow D(\A) \hookrightarrow{} D(\A') \xrightarrow{\bar{\partial'}} \bar{S}\cdot \bar{B},
	\end{equation}
	where $\bar{S}=S/\alpha_{H_0}$ and $\overline{\partial'}(\theta)=\overline{\theta(\alpha_{H_0})}$.
	
	The following new result regarding this sequence will be important in our subsequent proofs.
	It is a special case of ``surjectivity theorems'' for sequences of local functors
	recently obtained by the first author in \autocite{Abe23_BSequence}.
	
	\begin{theorem}\label{thm:surjective}
		Assume that $\pd(\A_X) < \codim_V(X)-2$ for all $X \in L_{\geq 2}(\A^{H_0})$.
		Then the map $\overline{\partial'}$ in the sequence~\eqref{eq:seq} is surjective.
		Hence, in this case, the sequence~\eqref{eq:seq} is also right exact. 
	\end{theorem}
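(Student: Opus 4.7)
The plan is to prove that the cokernel $C \coloneqq (\bar S\cdot \bar B)/\operatorname{im}(\overline{\partial'})$ of the map in~\eqref{eq:seq} vanishes, by a local-to-global argument using the projective-dimension hypothesis to control what happens at each stratum of $\A^{H_0}$. The key structural remark is that $\bar S\cdot \bar B$ is, up to a grading shift, a rank-one free $\bar S$-module, so it is Cohen--Macaulay of depth $\ell-1$; in particular its associated primes are controlled, and vanishing of $C$ is forced once one knows enough local surjectivity.

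First I would check surjectivity of $\overline{\partial'}$ at the generic point of $H_0$. At the prime $(\alpha_{H_0})$ the localization $(\A')_{(\alpha_{H_0})}$ is (essentially) empty, $D(\A')$ becomes free after this localization, and one produces by hand a derivation whose image is $\bar B$ modulo $\alpha_{H_0}$. This shows that $C$, if nonzero, is supported in strictly higher codimension inside $H_0$.

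Next, for each $X\in L_{\ge 2}(\A^{H_0})$ I would localize~\eqref{eq:seq} at the prime $\mathfrak p_X \subset S$ defining $X$. Under such localization, $D(\A')$ is controlled by $D((\A')_X) = D(\A_X\setminus\{H_0\})$ and Terao's polynomial $B$ is compatible with localization, so we are reduced to verifying surjectivity of the analogous map for the smaller triple $(\A_X,(\A')_X,(\A'')_X)$. The assumption $\pd(\A_X)<\codim_V(X)-2$ is crucial here: via the Auslander--Buchsbaum formula it gives $D(\A_X)$ sufficiently large depth for the local Ext-obstruction against surjectivity to vanish. This is exactly the inductive input that drives the ``local functor surjectivity'' machinery of~\autocite{Abe23_BSequence}.

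Finally I would assemble these local surjectivities. Since $C$ is a finitely generated $\bar S$-module supported on $V(\alpha_{H_0})$, and we have shown $C_{\mathfrak p_X}=0$ for every $X\in L(\A^{H_0})$, the support of $C$ is empty, hence $C=0$; this both establishes surjectivity of $\overline{\partial'}$ and yields right exactness of~\eqref{eq:seq}. The main obstacle I anticipate is the middle step: one must identify the localization of $D(\A')$ at $\mathfrak p_X$ in a way that is genuinely compatible with the map $\overline{\partial'}$, and do the codimension bookkeeping so that the sharp bound $\codim_V(X)-2$ appears in precisely the right place (rather than a weaker bound that would miss the target case $\pd(\A)\le 1$). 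This is the conceptual content extracted in \autocite{Abe23_BSequence}, and the proof should reduce to verifying that its hypotheses are met in our setup.
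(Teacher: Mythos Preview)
The paper's own proof of this theorem is a one-line citation: it records the statement as an immediate consequence of \autocite[Thm.~3.2, Thm.~3.3]{Abe23_BSequence} and gives no further argument. Your proposal ultimately lands in the same place---your final sentence is precisely ``reduce to checking the hypotheses of \autocite{Abe23_BSequence}''---so at the level of what is actually proved, you and the paper agree.

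Where you go further is in sketching the local-to-global mechanism that presumably underlies that reference. The outline is sound, but as a self-contained argument it has two soft spots worth flagging. First, you handle the generic point of $H_0$ and then pass directly to $X\in L_{\ge 2}(\A^{H_0})$, yet in your concluding step you assert $C_{\mathfrak p_X}=0$ for \emph{every} $X\in L(\A^{H_0})$. The case $X\in L_1(\A^{H_0})$, where $\codim_V(X)=2$ and the hypothesis $\pd(\A_X)<\codim_V(X)-2$ is vacuous, is never addressed; one must note separately that rank-two localizations $\A_X$ are always free and that surjectivity is automatic there. Second, vanishing of $C_{\mathfrak p_X}$ at all flats $X$ does not by itself give $\operatorname{Supp}(C)=\emptyset$, since the $\mathfrak p_X$ are far from all primes of $\bar S$. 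You need the standard remark that localizing $D(\A')$ at an arbitrary prime $\mathfrak p\supseteq(\alpha_{H_0})$ only sees the subarrangement $\A_Y$ with $Y=\bigcap_{\alpha_H\in\mathfrak p}H\in L(\A^{H_0})$, so that $\mathfrak p_Y\subseteq\mathfrak p$ and $C_{\mathfrak p}=(C_{\mathfrak p_Y})_{\mathfrak p}=0$. Neither point is difficult, but both are exactly the sort of bookkeeping that \autocite{Abe23_BSequence} packages---which is why the paper simply cites it.
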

	\begin{proof}
		This immediately follows from \autocite[Thm.~3.2, Thm.~3.3]{Abe23_BSequence}.
	\end{proof}
	
	We record the following consequences of the preceding theorem.
	\begin{corollary}
		\label{cor:SeqBRightExact}
		Assume that $\A_X$ is free for all $X\in L_2(\A^{H_0})$ and $\pd(\A) \leq 1$. Then the sequence~\eqref{eq:seq} is also right exact.
	\end{corollary}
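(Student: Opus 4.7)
The strategy is to reduce the claim directly to Theorem~\ref{thm:surjective}. Once we verify that the hypothesis of that theorem holds under our two assumptions, right exactness of the sequence~\eqref{eq:seq} is immediate.

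The plan is to check the inequality $\pd(\A_X) < \codim_V(X)-2$ for every $X\in L_{\geq 2}(\A^{H_0})$, splitting according to the codimension of $X$. First I would record the elementary fact that any $X\in L_k(\A^{H_0})$ satisfies $X\subseteq H_0$ and $\codim_V(X)=k+1$, so the condition becomes $\pd(\A_X)<k-1$.

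For $X\in L_2(\A^{H_0})$ we have $\codim_V(X)=3$, and the required bound $\pd(\A_X)<1$ means exactly that $\A_X$ is free, which is the first hypothesis of the corollary. For $X\in L_{\geq 3}(\A^{H_0})$ we have $\codim_V(X)\geq 4$, and Proposition~\ref{prop:LocPD} together with the assumption $\pd(\A)\leq 1$ gives $\pd(\A_X)\leq \pd(\A)\leq 1<\codim_V(X)-2$. Both cases being verified, Theorem~\ref{thm:surjective} applies and $\overline{\partial'}$ is surjective, so the sequence~\eqref{eq:seq} is right exact.

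There is no genuine obstacle here; the only point requiring a moment of attention is the codimension bookkeeping (interpreting $L_2(\A^{H_0})$ as a stratum in $H_0$ rather than in $V$), which is just unwinding the definition. The corollary is therefore a clean specialization of Theorem~\ref{thm:surjective}, isolating the situation $\pd(\A)\leq 1$ that will be repeatedly encountered in the subsequent sections.
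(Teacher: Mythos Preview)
Your proof is correct and follows exactly the approach indicated in the paper, which simply notes that the result follows from Theorem~\ref{thm:surjective} and Proposition~\ref{prop:LocPD}. You have merely made the codimension bookkeeping explicit (distinguishing the cases $X\in L_2(\A^{H_0})$ and $X\in L_{\geq 3}(\A^{H_0})$), which is precisely the verification the paper leaves implicit.
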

	\begin{proof}
		This follows immediately from \Cref{thm:surjective} and \Cref{prop:LocPD}.	
	\end{proof}
	
	\begin{lemma}
		\label{lem:SeqBRightExactPd1}
		Assume that $\A_X$ is free for all $X\in L_2(\A^{H_0})$ and $\pd(\A) \leq 1$. Then we also have $\pd(\A') \leq 1$.
	\end{lemma}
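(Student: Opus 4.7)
The plan is to extend the exact sequence~\eqref{eq:seq} to a short exact sequence via \Cref{cor:SeqBRightExact}, control the projective dimension of the resulting cokernel, and conclude by the long exact sequence of $\Ext$.

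First, I would invoke \Cref{cor:SeqBRightExact}, whose hypotheses coincide with ours, to promote~\eqref{eq:seq} to the short exact sequence
\[
0 \to D(\mathcal{A}) \to D(\mathcal{A}') \xrightarrow{\bar{\partial'}} \bar{S}\cdot\bar{B} \to 0.
\]

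Second, I would verify that $\pd_S(\bar{S}\cdot\bar{B}) \leq 1$. If $\alpha_{H_0}$ divides $B$, then $\bar{B}=0$, the cokernel is trivial, and $D(\mathcal{A}) = D(\mathcal{A}')$, so the conclusion is immediate. Otherwise $\bar{B}$ is a nonzero element of the polynomial ring $\bar{S}$, which is an integral domain, so multiplication by $\bar{B}$ yields an isomorphism of graded $S$-modules $\bar{S}(-\deg B) \cong \bar{S}\cdot\bar{B}$, and the resolution $0 \to S(-1) \xrightarrow{\alpha_{H_0}} S \to \bar{S} \to 0$ shows $\pd_S(\bar{S}) = 1$.

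Third, I would apply $\Ext^i_S(-,N)$ for an arbitrary $S$-module $N$. For $i \geq 2$ the relevant piece of the long exact sequence reads
\[
\Ext^i_S(\bar{S}\cdot\bar{B}, N) \to \Ext^i_S(D(\mathcal{A}'), N) \to \Ext^i_S(D(\mathcal{A}), N),
\]
whose outer terms vanish by the previous step and by the hypothesis $\pd(\mathcal{A})\leq 1$ combined with \Cref{rem:PDExt}. Hence the middle term vanishes for all $i \geq 2$ and all $N$, and \Cref{rem:PDExt} applied in reverse yields $\pd(\mathcal{A}') \leq 1$.

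No substantial obstacle is anticipated here, since the essential content, namely the right-exactness at $\bar{S}\cdot\bar{B}$, has already been packaged into \Cref{thm:surjective} and \Cref{cor:SeqBRightExact}; the remainder is formal homological algebra together with the one-line computation of $\pd_S(\bar{S})$.
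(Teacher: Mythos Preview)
Your proposal is correct and follows essentially the same route as the paper: invoke \Cref{cor:SeqBRightExact} to obtain the short exact sequence, observe that $\bar{S}\cdot\bar{B}$ has projective dimension at most $1$ over $S$, and conclude via the long exact $\Ext$-sequence and \Cref{rem:PDExt}. The only cosmetic difference is that you compute $\pd_S(\bar{S}\cdot\bar{B})$ directly from the Koszul-type resolution of $\bar{S}$, whereas the paper appeals to a change-of-rings result from \autocite{Weibel}; your version is if anything slightly more self-contained.
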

	\begin{proof}
		By \Cref{cor:SeqBRightExact}, the $B$-sequence is right-exact
		and by assumption, $\pd(\A) = 0$, that is $\Ext_S^i(D(\A),N) = 0$ for all $i>1$
		and all $S$-modules $N$.
		The principal ideal of $\bar{S}$ generated by $\bar{B}$ is free as an $\bar{S}$-module. 
		So, by the graded version of \autocite[Cor.~4.3.14]{Weibel}, 
		the module $\bar{S}\bar{B}$ has projective dimension $1$, i.e.\ $\Ext_S^i(\bar{S}\bar{B},N) = 0$ for all $i>1$ by \Cref{rem:PDExt}.
		It then follows from the long exact $\Ext$-sequence, that for the middle term in the $B$-sequence,
		we have $\Ext_S^i(D(\A'),N) = 0$ for all $i>1$ which is equivalent to $\pd(\A') = \pd(D(\A')) \leq 1$
		by \Cref{rem:PDExt}.
	\end{proof}
	
	
	\section{Weakly chordal graphic arrangements}
	\label{sec:WeaklyChordal}
	
	The goal of this section is to show that a graphic arrangement of a weakly chordal graph has projective dimension at most $1$,
	which gives one direction of our main \Cref{thm:main}.
	
	\begin{theorem}
		\label{thm:pdWeaklyChordal1}
		Let $G=(\V,E)$ be a weakly chordal graph.
		Then $\pd (\A(G)) \le 1$.
	\end{theorem}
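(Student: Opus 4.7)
The plan is to perform a reverse induction along the edge-addition sequence supplied by \Cref{lem:wc}. Let $e_1, \dots, e_k \notin E$ be the edges it provides, so that $G_i \coloneqq (\V, E \cup \{e_1, \dots, e_i\})$ is weakly chordal, $e_i$ is not contained in any induced $C_4$ of $G_i$, and $G_k$ is chordal. The base case $\pd(\mathcal{A}(G_k)) = 0$ follows from \Cref{thm:ChordalFree}. In the inductive step I assume $\pd(\mathcal{A}(G_i)) \leq 1$; since $\mathcal{A}(G_{i-1}) = \mathcal{A}(G_i) \setminus \{H_0\}$ for $H_0 \coloneqq \ker(x_p - x_q)$ with $e_i = \{p,q\}$, it will suffice to invoke \Cref{lem:SeqBRightExactPd1} for this deletion.

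To apply \Cref{lem:SeqBRightExactPd1} I must verify that $\mathcal{A}(G_i)_X$ is free for every $X \in L_2(\mathcal{A}(G_i)^{H_0})$. Any such $X$ equals $H_0 \cap H_1 \cap H_2$ with $H_1, H_2 \in \mathcal{A}(G_i) \setminus \{H_0\}$ having distinct restrictions to $H_0$, and thus has codimension $3$ in $V$. Writing $e_i, f_1, f_2$ for the three edges corresponding to $H_0, H_1, H_2$ and viewing them as a three-edge multigraph, the resulting connected-component sizes must sum to codimension $3$, so only the patterns $(2,2,2)$, $(3,2)$, or $(4)$ can occur. Using the description of graphic localizations as disjoint unions of induced subgraphs, $\mathcal{A}(G_i)_X$ is exactly the disjoint union of the induced subgraphs of $G_i$ on these equivalence classes. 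In the $(2,2,2)$ and $(3,2)$ cases every component is an induced subgraph on at most three vertices, which is trivially chordal and hence free by \Cref{thm:ChordalFree}.

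The one case that requires the weak-chordality hypothesis is $(4)$, where $\mathcal{A}(G_i)_X$ is the graphic arrangement of the induced subgraph of $G_i$ on the four vertices spanned by $e_i, f_1, f_2$. The only non-chordal graph on four vertices is $C_4$, so this localization fails to be free precisely when those four vertices induce a $C_4$ containing $e_i$, that is, when $e_i$ lies on an induced $C_4$ of $G_i$ --- which is forbidden by condition (2) of \Cref{lem:wc}. Consequently $\mathcal{A}(G_i)_X$ is free for every relevant $X$, \Cref{lem:SeqBRightExactPd1} yields $\pd(\mathcal{A}(G_{i-1})) \leq 1$, and descending all the way to $i = 0$ completes the proof that $\pd(\mathcal{A}(G)) \leq 1$. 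The main obstacle in this plan is precisely the $(4)$-case analysis, and it is exactly why condition (2) in \Cref{lem:wc} was isolated in the preliminary section.
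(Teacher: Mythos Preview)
Your proof is correct and follows the same strategy as the paper: reverse induction along the edge sequence of \Cref{lem:wc}, invoking \Cref{lem:SeqBRightExactPd1} at each step after checking that every rank-$3$ localization through $H_0$ is free, which reduces (via your more explicit case split than the paper gives) to the fact that $e_i$ lies on no induced $C_4$ of $G_i$. One small slip of phrasing: the component sizes do not literally ``sum to codimension $3$''---rather $\sum_j (s_j-1)=3$---but your enumeration of the patterns $(2,2,2)$, $(3,2)$, $(4)$ and the subsequent analysis are correct.
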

	\begin{proof}
		Firstly, \Cref{lem:wc} implies that there exists a sequence of edges $e_1,\dots,e_k$ such that $G_i = (\V, E \cup \{e_1,..,e_i\})$ is weakly chordal,
		the edge $e_i$ is not the middle edge of any induced $P_4$ in $G_i$ for $i = 1,..,k$, and $G_k$ is chordal.
		
		We prove that $\pd (\A(G_i)) \le 1$ for all $i = 1,..,k$ by a descending induction.
		As $G_k$ is chordal, the arrangement $\A(G_k)$ is free and hence $\pd (\A(G_k))=0$ by \Cref{thm:ChordalFree}.
		So assume that $\pd (\A(G_j)) \le 1$ for some $1<j\le k$. We will now argue that this implies $\pd (\A(G_{j-1})) \le 1$ which finishes the proof.
		
		Let $H_0$ be the hyperplane corresponding to the edge $e_j$ in the arrangement $\A(G_j)$.
		We aim to apply  \Cref{lem:SeqBRightExactPd1} to $\A(G_j)$ and $\A(G_{j-1})$.
		To check the assumption of this result, we consider $X\in L_2(\A(G_j)^{H_0})$ and need to show that the arrangement $\A(G_j)_X$ is free.
		
		Assume the contrary, i.e., that $\A(G_j)_X$ is not free.
		By definition of $X$, the arrangement $\A(G_j)_X$ is a graphic arrangement on an induced subgraph of $G_j$ on four vertices containing the edge $e_j$.
		The assumption that this arrangement is not free implies that this induced subgraph is not chordal.
		As this subgraph only contains four vertices it must be the cycle $C_4$.
		This however contradicts condition (2) in~\Cref{lem:wc} which states that the edge $e_j$ cannot be an edge of an induced cycle $C_4$ in the graph~$G_j$.
		Therefore, the arrangement $\A(G_j)_X$ is free for all $X\in L_2(\A(G_j)^{H_0})$.
		
		Moreover, by the induction hypothesis, we have $\pd(\A(G_j)) \leq 1$.
		Thus, by Lemma \ref{lem:SeqBRightExactPd1}, we also have $\pd(\A(G_{j-1})) \le 1$ as desired.
	\end{proof}

	Let us record the following result which immediately follows from the previous theorem and Theorem \ref{thm:ChordalFree}.
	\begin{corollary}
		\label{coro:WChordNotChordPD1}
		Let $G$ be a weakly chordal but not chordal graph. Then $\pd(\A(G)) = 1$.
	\end{corollary}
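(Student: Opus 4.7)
The statement to prove is a two-line consequence of results already established, so the plan is essentially a matter of assembling the upper and lower bounds.

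For the upper bound, I would invoke Theorem~\ref{thm:pdWeaklyChordal1} directly: since $G$ is weakly chordal, it gives $\pd(\A(G)) \le 1$. No further work is needed here; the substantive content of the paper lies in that theorem.

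For the lower bound, I would argue that $\pd(\A(G)) \ne 0$. Since $D(\A(G))$ is a finitely generated graded reflexive module over the polynomial ring $S$, the graded Nakayama argument recalled in Subsection~\ref{ssec:PD} tells us that $D(\A(G))$ is projective if and only if it is free, i.e., $\pd(\A(G)) = 0$ is equivalent to $\A(G)$ being free. Now apply Theorem~\ref{thm:ChordalFree} of Stanley--Edelman--Reiner: $\A(G)$ is free exactly when $G$ is chordal. Since by hypothesis $G$ is not chordal, $\A(G)$ is not free, hence $\pd(\A(G)) \ge 1$.

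Combining these two bounds yields $\pd(\A(G)) = 1$. There is no real obstacle here, as both ingredients are in place; the only point requiring care is the implicit use of the equivalence between freeness and $\pd = 0$ for $D(\A)$, which has already been recorded in the preliminaries.
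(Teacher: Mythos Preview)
Your proposal is correct and matches the paper's own reasoning: the paper simply records that the corollary ``immediately follows from the previous theorem and Theorem~\ref{thm:ChordalFree}'', which is precisely your combination of the upper bound $\pd(\A(G))\le 1$ from Theorem~\ref{thm:pdWeaklyChordal1} with the lower bound $\pd(\A(G))\ge 1$ coming from non-chordality via Theorem~\ref{thm:ChordalFree}.
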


	
	\section{Graphic arrangements of antiholes}
	\label{sec:PDAntihole}
	
	The main result of this section yields the other direction of implications in \Cref{thm:main}.
	Recall that the graph $C_\ell^C$ is the complement graph of a cycle with $\ell$ vertices which is called the $\ell$-\emph{antihole}.
	\begin{theorem}[Theorem \ref{thm:AntiholePD2}]
		\label{thm:antholes}
		For all $\ell\ge 6$ it holds that
		\[
		\pd(\A(C_\ell^C))= 2.
		\]
	\end{theorem}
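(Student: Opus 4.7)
The plan is to establish $\pd(\A(C_\ell^C)) \leq 2$ and $\pd(\A(C_\ell^C)) \geq 2$ as separate assertions, with both ultimately being subsumed by the explicit minimal free resolution promised in \Cref{thm:antihole}.

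For the upper bound, I would add a single edge to the antihole: pick an edge $e$ of $C_\ell$ (i.e., a non-edge of $C_\ell^C$) and set $G_1 := C_\ell^C \cup \{e\}$. Then $G_1^C = C_\ell \setminus \{e\} = P_\ell$ is a path, hence chordal and therefore weakly chordal, and since weak chordality is closed under complementation $G_1$ is itself weakly chordal. By \Cref{thm:pdWeaklyChordal1} this gives $\pd(\A(G_1)) \leq 1$. Taking $H_0$ to be the hyperplane of $e$, the $B$-sequence of Subsection~\ref{ssec:PolyB} applied to the triple $(\A(G_1), \A(C_\ell^C), \A(G_1)^{H_0})$ yields an exact sequence
\[
0 \to D(\A(G_1)) \to D(\A(C_\ell^C)) \xrightarrow{\overline{\partial'}} \bar{S}\bar{B},
\]
producing the short exact sequence $0 \to D(\A(G_1)) \to D(\A(C_\ell^C)) \to M \to 0$ with $M := \operatorname{image}(\overline{\partial'}) \subseteq \bar{S}\bar{B}$. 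Because $\bar{S}$ is a UFD and $\bar{B}$ generates a principal ideal, $M$ has the form $\bar{B}\cdot J$ for some ideal $J$ of $\bar{S}$, and one should be able to identify $J$ combinatorially via \Cref{prop:B} and verify that $\pd_S(M) \leq 2$. The standard projective-dimension estimate for middle terms of short exact sequences then gives $\pd(\A(C_\ell^C)) \leq \max(\pd(\A(G_1)), \pd(M)) \leq 2$.

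For the lower bound, the obvious combinatorial route through \Cref{prop:LocPD} and \Cref{coro:InducedCyclePDBound} only yields $\pd \geq 1$: $C_\ell^C$ does contain induced $4$-cycles, but it has no induced cycles of length $\geq 5$, so the generic-arrangement bound cannot be invoked for longer cycles. Consequently, a direct syzygy computation is needed. I would proceed by writing down a candidate generating set for $D(\A(C_\ell^C))$ (the Euler derivation and a dihedrally-symmetric family indexed by the structure of $C_\ell^C$), verify it is a generating set using Saito's criterion (\Cref{thm:SaitosCriterion}) on suitable free localizations, compute the first syzygy matrix, and then explicitly exhibit a nonzero second syzygy. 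Since all matrix entries will lie in $(x_1,\dots,x_\ell)$, the resulting complex is a minimal graded free resolution, whose length equals $\pd(\A(C_\ell^C))$ by standard commutative algebra; this delivers $\pd(\A(C_\ell^C)) = 2$ and is precisely the content of the forthcoming \Cref{thm:antihole}.

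The main obstacle is the lower bound. The $B$-sequence machinery of \Cref{thm:surjective} and \Cref{lem:SeqBRightExactPd1} naturally produces upper bounds, and the combinatorial obstructions from induced cycles saturate at $\pd \geq 1$ for antiholes, so one must genuinely exhibit a second-order relation. I expect the bulk of the work to be setting up an explicit symmetric resolution for all $\ell \geq 6$, possibly inductively: the base case $\ell = 6$ (the triangular prism of \Cref{fig:prism}) can be handled by direct computation, and an inductive step $\ell \to \ell+1$ using \Cref{thm:surjective} for a carefully chosen hyperplane should reduce the $(\ell+1)$-antihole to the $\ell$-antihole up to controlled corrections, with the cokernel of the $B$-map encoding the extra projective dimension in a transparent way.
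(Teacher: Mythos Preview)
Your plan correctly anticipates that the heart of the argument is the explicit minimal free resolution of \Cref{thm:antihole}, which simultaneously gives both bounds. However, the route you sketch to get there differs from the paper in an essential way, and your separate upper-bound argument has a genuine gap.

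For the upper bound: your claim that $\pd_S(M) \leq 2$ for $M = \operatorname{image}(\overline{\partial'}) \subseteq \bar{S}\bar{B}$ is unjustified. Writing $M \cong J$ for an ideal $J \subseteq \bar{S}$ is fine, but ideals of the $(\ell-1)$-variable polynomial ring $\bar{S}$ can have $\pd_{\bar{S}}$ up to $\ell-2$, hence $\pd_S$ up to $\ell-1$; you would need to identify $J$ explicitly and resolve it, which is no easier than the original problem. (And you cannot simply invoke \Cref{thm:surjective} here: its hypothesis requires $\A(G_1)_X$ to be free for every $X \in L_2(\A(G_1)^{H_0})$, which amounts to asking that no induced $C_4$ of $G_1$ contains the added edge $e$---this fails for the antihole plus one cycle-edge.) Since the explicit resolution already gives $\pd = 2$, this detour is both incomplete and unnecessary.

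For the resolution itself: the paper does \emph{not} induct on $\ell$ by reducing the $(\ell+1)$-antihole to the $\ell$-antihole. Instead it starts from the free braid arrangement $\A_{\ell-1}$ with its Saito basis $\theta_0,\dots,\theta_{\ell-1}$ and deletes the hyperplanes $H_{i,i+1}$ one at a time, tracking generators of $D(\B_{1,j})$ via Terao's polynomial $B$ (\Cref{prop:deletingSPOG}): each deletion adds one new generator $\varphi_j$ because $\deg B = \ell-3 = \deg \varphi_j$. The crux is the \emph{last} deletion $H_{\ell,1}$: here $\deg B$ drops to $\ell-4 < \deg \varphi_1 = \deg \varphi_\ell$, so the standard $B$-argument no longer produces the missing generators, and the paper introduces a new two-hyperplane refinement of \Cref{prop:B} (namely \Cref{thm:idealB}) to show that $\varphi_1, \varphi_\ell$ still suffice. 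Only after this explicit generating set is in hand does the paper determine the first syzygies (\Cref{thm:rel}) and then the unique second syzygy by an elementary-symmetric-function computation (\Cref{lemma:sp}, \Cref{thm:antihole}). Your inductive plan and your remark about verifying a generating set via Saito's criterion on localizations do not anticipate this degree-drop obstruction or the new tool needed to overcome it; that is the missing idea.
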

	
	Before we delve into the arguments, leading step by step to the above principal theorem of this section,
	let us first explain how this concludes the proof of \Cref{thm:main}.
	\begin{proof}[Proof of \Cref{thm:main}, using \Cref{thm:antholes}]
		By \Cref{thm:pdWeaklyChordal1}, we have $\pd(\A(G)) \leq 1$ for a weakly chordal graph~$G$
		and $\pd(\A(G)) = 1$ if $G$ is not chordal by \Cref{coro:WChordNotChordPD1}.
		
		Conversely, assume that $G$ is a graph such that $\pd(\A(G)) = 1$.
		In particular, by \Cref{thm:ChordalFree}, the graph $G$ is not chordal.
		Suppose $G$ is also not weakly chordal. 
		Then, by definition, there is either an $m\geq 5$ such that $C_m$ is an induced subgraph or
		there is an $\ell \geq 6$ such that $C_\ell^C$ is an induced subgraph of $G$.
		In the first case, by \Cref{coro:InducedCyclePDBound}, we have $\pd(\A(G)) \geq \ell-3 \geq 2$;
		in the second case, by \Cref{prop:LocPD} and \Cref{thm:antholes}, we also have
		$\pd(\A(G)) \geq 2$. Both cases contradict our assumption.
		Hence, $G$ is weakly chordal.	
	\end{proof}

	To prove Theorem \ref{thm:antholes}, let us first introduce some notation for
	special derivations we will consider in this section. 
	Let $G$ be a graph with vertex set $$
	\V=
	[\ell]:=\{1,2,\ldots,\ell\}.
	$$ 
	Write
	$H_{ij}:=\ker(x_i-x_j)$ for the hyperplane corresponding to the edge $\{i,j\}$ and let 
	$$
	\A_{\ell-1}:=\{H_{ij} \mid 1 \le i < j \le \ell\}
	$$
	be the graphic arrangement of the complete graph (or equivalently, the Weyl arrangement of type $A_{\ell-1}$, also called the braid arrangement)
	in $\mathbb{Q}^\ell$.
	We set
	$$
	\theta_i:=\sum_{j=1}^\ell x_j^i \ddxi{j}\ (i \ge 0)
	$$
	and define
	$$
	\varphi_i:=\prod_{j \in [\ell] \setminus \{i-1,i,i+1\}} (x_i-x_j)\ddxi{i}
	$$
	for $i \neq 1, \ell$.
	Also define 
	$$
	\varphi_1:=\prod_{i=3}^{\ell-1} (x_1-x_i)\ddxi{1}
	$$
	and 
	$$
	\varphi_\ell:=\prod_{i=2}^{\ell-2} (x_\ell-x_i)\ddxi{\ell}.
	$$
	
	In this section we always consider indices and vertices in $[\ell]$ in a cyclic way,
	i.e.,\ we identify $i+\ell$ with $i$ etc.
	
	There is the following fundamental result due to K.\ Saito.
	
	\begin{theorem}[\autocite{Saito80}]
		$\A_{\ell-1}$ is free with basis 
		$$
		\theta_0,\ldots,\theta_{\ell-1}.$$
		\label{thm:saito}
	\end{theorem}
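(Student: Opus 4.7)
The plan is to apply Saito's criterion (\Cref{thm:SaitosCriterion}) to the candidate basis $\theta_0, \ldots, \theta_{\ell-1}$. This reduces the freeness claim to two verifications: first, that every $\theta_i$ lies in $D(\A_{\ell-1})$; and second, that the coefficient matrix $M(\theta_0, \ldots, \theta_{\ell-1})$ has determinant equal to $Q(\A_{\ell-1})$ up to a nonzero scalar.

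First I would verify the derivation condition hyperplane by hyperplane. Since $Q(\A_{\ell-1}) = \prod_{1 \le a < b \le \ell}(x_a - x_b)$ is a product of pairwise coprime linear forms, it suffices to check that $x_a - x_b$ divides $\theta_i(x_a - x_b)$ for every pair $\{a,b\} \subseteq [\ell]$ and every $i \in \{0,\ldots,\ell-1\}$. But $\theta_i(x_a - x_b) = x_a^i - x_b^i$, which factors as $(x_a - x_b)(x_a^{i-1} + x_a^{i-2}x_b + \cdots + x_b^{i-1})$ for $i \ge 1$ and vanishes for $i = 0$. This is a one-line observation.

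Next I would recognize the coefficient matrix as a Vandermonde matrix. With the convention fixed in the paper, the $(i,j)$-entry of $M(\theta_0, \ldots, \theta_{\ell-1})$ is $\theta_{j-1}(x_i) = x_i^{j-1}$, so the classical Vandermonde identity gives
\[
\det M(\theta_0, \ldots, \theta_{\ell-1}) = \prod_{1 \le a < b \le \ell} (x_b - x_a),
\]
which agrees with $Q(\A_{\ell-1})$ up to an overall sign. Saito's criterion then immediately yields that $\theta_0, \ldots, \theta_{\ell-1}$ form an $S$-basis of $D(\A_{\ell-1})$, and in particular that $\A_{\ell-1}$ is free.

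There is no genuine obstacle in this argument; the proof is essentially a bookkeeping exercise, with the only non-trivial ingredient being the identification of the coefficient matrix as a Vandermonde matrix, after which both conditions of Saito's criterion follow from classical identities.
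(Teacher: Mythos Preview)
Your proof is correct and is the standard argument via Saito's criterion and the Vandermonde determinant. Note, however, that the paper does not actually supply a proof of this statement: it is quoted as a classical result of Saito and only cited, so there is no in-paper proof to compare against. Your approach is exactly the expected one and, in fact, the paper itself uses the same Vandermonde-style expansion later (in the proof of \Cref{lemma:freedeletion}) for a closely related computation.
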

	
	With this, we can show the following.
	
	\begin{lemma}\label{lemma:freedeletion}
		Let 
		$$
		\mathcal{B}_{i,j}:=\A_{\ell-1} \setminus \{H_{s,s+1} \mid 
		i \le s \le j\}.
		$$
		If $j=i+2$, then $\B_{i,i+2}$ is free with basis 
		$$
		\theta_0,\ldots,\theta_{\ell-3}, \varphi_{i+1},\varphi_{i+2}.
		$$
	\end{lemma}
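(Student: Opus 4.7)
The plan is to apply Saito's criterion (\Cref{thm:SaitosCriterion}) directly to the list $\theta_0, \ldots, \theta_{\ell-3}, \varphi_{i+1}, \varphi_{i+2}$. This requires two checks: that each of these derivations belongs to $D(\B_{i,i+2})$, and that the determinant of the coefficient matrix is a nonzero scalar multiple of $Q(\B_{i,i+2})$.

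For membership, the derivations $\theta_0, \ldots, \theta_{\ell-3}$ lie in $D(\A_{\ell-1}) \subseteq D(\B_{i,i+2})$ by \Cref{thm:saito}. For $\varphi_{i+1}$, I would observe that it acts as zero on every linear form $x_s - x_t$ with $\{s,t\}$ not incident to vertex $i+1$, while the hyperplanes of $\B_{i,i+2}$ incident to $i+1$ are exactly $H_{i+1,j}$ for $j \in [\ell] \setminus \{i, i+1, i+2\}$, since $H_{i,i+1}$ and $H_{i+1,i+2}$ have been removed. For each such $j$, the factor $x_{i+1} - x_j$ appears in the product defining $\varphi_{i+1}$, so $\varphi_{i+1}(x_{i+1} - x_j)$ is divisible by $x_{i+1} - x_j$. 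The argument for $\varphi_{i+2}$ is identical, using that the removed hyperplanes incident to $i+2$ are $H_{i+1,i+2}$ and $H_{i+2,i+3}$.

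For the determinant, I would exploit the sparsity of the two columns coming from $\varphi_{i+1}$ and $\varphi_{i+2}$: each has a unique nonzero entry, in rows $i+1$ and $i+2$ respectively. Cofactor expansion along these columns yields
\[
\det M \;=\; \pm \prod_{j \notin \{i, i+1, i+2\}} (x_{i+1} - x_j)\, \cdot\, \prod_{j \notin \{i+1, i+2, i+3\}} (x_{i+2} - x_j)\, \cdot\, \det M',
\]
where $M'$ is the $(\ell-2)\times(\ell-2)$ Vandermonde matrix in the variables $\{x_j \mid j \in [\ell]\setminus\{i+1, i+2\}\}$ arising from $\theta_k(x_j) = x_j^k$; hence $\det M' = \pm \prod_{\substack{j<k \\ j,k \in [\ell]\setminus\{i+1,i+2\}}} (x_k - x_j)$. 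Matching the three factor blocks with the edges of $\B_{i,i+2}$---remaining edges at $i+1$, remaining edges at $i+2$, and edges between vertices outside $\{i+1, i+2\}$---I expect to recover exactly $Q(\B_{i,i+2})$ up to sign. The only step requiring care is this final bookkeeping, namely that the three factors of $Q(\A_{\ell-1})$ not accounted for are precisely $x_{i+1} - x_i$, $x_{i+2} - x_{i+1}$, and $x_{i+3} - x_{i+2}$, matching the deleted hyperplanes; a degree check $\binom{\ell-2}{2} + 2(\ell-3) = \binom{\ell}{2} - 3$ confirms the count. Saito's criterion then delivers the claimed basis and freeness of $\B_{i,i+2}$.
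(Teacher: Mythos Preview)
Your proof is correct and follows essentially the same approach as the paper: verify membership of the proposed derivations in $D(\B_{i,i+2})$, then expand the coefficient matrix along the two sparse columns coming from $\varphi_{i+1}$ and $\varphi_{i+2}$ to obtain a Vandermonde determinant times the two product factors, and match this against $Q(\B_{i,i+2})$ via Saito's criterion. You have simply spelled out the membership check and the edge bookkeeping more carefully than the paper does.
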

	
	\begin{proof}
		We use Saito's criterion (\Cref{thm:SaitosCriterion}).
		Apparently $\theta_0,\dots,\theta_{\ell-3},\varphi_{i+1},\varphi_{i+2} \in D(\B_{i,i+2})$.
		Considering the coefficient matrix $M(\theta_0,\ldots,\theta_{\ell-3},\varphi_{i+1},\varphi_{i+2})$,
		to compute its determinant, we can expand it along the last two columns,
		yielding a smaller Vandermonde determinant $\prod_{1\leq s < t \leq \ell, i,j \notin \{i+1,i+2\}}(x_s-x_t)$
		multiplied with the only entries in the last two columns $\prod_{j \in [\ell] \setminus \{i,i+1,i+2\}} (x_{i+1}-x_j)$
		and $\prod_{j \in [\ell] \setminus \{i+1,i+2,i+3\}} (x_{i+2}-x_j)$.
		But the product of these three terms is exactly the defining polynomial $Q(\B_{i,i+2})$ which yields the freeness
		of $\B_{i,i+2}$ by Saito's criterion.
	\end{proof}

	\begin{proposition}\label{prop:deletingSPOG}
		If $i+2\le j \neq i-1$, then $D(\B_{i,j})$ is generated by 
		$$
		\theta_0,\ldots,\theta_{\ell-3}, \varphi_{i+1},\varphi_{i+2},\ldots,\varphi_{j}.
		$$
	\end{proposition}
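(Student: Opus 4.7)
The plan is to proceed by induction on $j - i$. The base case $j = i+2$ is exactly \Cref{lemma:freedeletion}, whose basis for the free module $D(\B_{i,i+2})$ coincides with the proposed generating set. For the inductive step, I would fix $j \geq i + 3$ (with $j \ne i - 1$ cyclically) and assume the proposition for $j-1$. The principal tool will be Terao's $B$-sequence~\eqref{eq:seq} applied to the triple $(\B_{i,j-1}, \B_{i,j}, \B_{i,j-1}^{H_0})$ with distinguished hyperplane $H_0 = H_{j, j+1}$. Provided that this sequence is right exact and that $\varphi_j$ maps to a generator of $\overline{S} \cdot \overline{B}$ under $\overline{\partial'}$, any $\theta \in D(\B_{i,j})$ decomposes as $f \cdot \varphi_j + \theta'$ with $f \in S$ and $\theta' \in D(\B_{i,j-1})$, and the inductive hypothesis expresses $\theta'$ in the listed generators.

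To establish right exactness, I would appeal to \Cref{cor:SeqBRightExact}, which requires $\pd(\B_{i, j-1}) \leq 1$ together with freeness of $(\B_{i, j-1})_X$ for every $X \in L_2(\B_{i, j-1}^{H_0})$. For the projective dimension bound, the graph underlying $\B_{i, j-1}$ has complement equal to a path on the vertices $i, i+1, \ldots, j$ (plus isolated vertices); since paths are chordal and weak chordality is preserved under complementation, the graph is weakly chordal, so \Cref{thm:pdWeaklyChordal1} yields $\pd(\B_{i,j-1}) \leq 1$. For the localization condition, any such $X$ yields a rank-$3$ graphic subarrangement whose underlying graph has components on at most four vertices, one containing $\{j, j+1\}$. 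Since the only non-chordal $4$-vertex graph is $C_4$, by \Cref{thm:ChordalFree} it suffices to rule out an induced $C_4$ in $\B_{i,j-1}$ through the edge $\{j, j+1\}$. Any such $C_4$ on $\{a, j, j+1, b\}$ would force $\{a, j+1\}$ to be a non-edge; however, the removed edges are $\{i, i+1\}, \ldots, \{j-1, j\}$, whose endpoints all lie in $\{i, i+1, \ldots, j\}$, and $j + 1 \notin \{i, i+1, \ldots, j\}$ because $j \ne i - 1$.

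To show that $\varphi_j$ provides the required lift, a direct calculation gives $\varphi_j(\alpha_{H_0}) = \prod_{m \in [\ell] \setminus \{j-1, j, j+1\}}(x_j - x_m)$, a polynomial of degree $\ell - 3$ whose image in $\overline{S} = S/(\alpha_{H_0})$ is non-zero, since none of its factors is a multiple of $\alpha_{H_0}$ and $\overline{S}$ is a domain. By \Cref{prop:B}(3), this image lies in $\overline{S} \cdot \overline{B}$. A direct count of restriction classes in $\B_{i, j-1}^{H_0}$---using that pairs $H_{j, m}$ and $H_{j+1, m}$ collapse to a single class for $m \ne j-1$, while only $H_{j-1, j+1}$ contributes at $m = j-1$---yields $|\B_{i, j}| - |\B_{i, j-1}^{H_0}| = \ell - 3$, which equals $\deg B$ by \Cref{prop:B}(1). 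The degree match then forces $\overline{\varphi_j(\alpha_{H_0})} = c \overline{B}$ for some $c \in \KK^\times$, so $\overline{\partial'}(\varphi_j)$ generates $\overline{S} \cdot \overline{B}$ as required.

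The main obstacle I foresee is the careful bookkeeping required both for the cyclic index convention and for matching the enumeration of restriction classes against $\deg \varphi_j(\alpha_{H_0})$. The conceptual heart of the argument, however, is the simple combinatorial observation that vertex $j+1$ never appears as an endpoint of a removed edge---this single fact simultaneously rules out an induced $C_4$ through $\{j, j+1\}$ (securing right exactness) and ensures that $\overline{\varphi_j(\alpha_{H_0})}$ is non-zero in $\overline{S}$ (securing the lift).
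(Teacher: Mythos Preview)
Your proof is correct and follows essentially the same inductive strategy as the paper's, with one redundant detour. The appeal to \Cref{cor:SeqBRightExact}---and hence to \Cref{thm:pdWeaklyChordal1} and the induced-$C_4$ check---is unnecessary: once you establish $\overline{\partial'}(\varphi_j) = c\,\overline{B}$ with $c \in \KK^\times$ (which you do via the degree match), the decomposition $\theta = f\varphi_j + \theta'$ with $\theta' \in \ker\overline{\partial'} = D(\B_{i,j-1})$ follows directly from $\overline{\partial'}(\theta) \in \overline{S}\cdot\overline{B}$, and that containment is just \Cref{prop:B}(3). In other words, surjectivity of $\overline{\partial'}$ is a \emph{consequence} of having found $\varphi_j$, not a hypothesis you need to verify separately. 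The paper's proof proceeds exactly this way (after first reducing to $i=1$ via the $\Sym_\ell$-action), thereby avoiding any forward reference to \Cref{thm:pdWeaklyChordal1}.
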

	
	\begin{proof}
		Firstly, the defining polynomials of the $\B_{i,j}$ together with the derivations $\varphi_{i+1},\varphi_{i+2},\ldots,\varphi_{j}$ for a fixed $|i-j| = s$ 
		are contained in one orbit under the action of the symmetric group $\Sym_\ell$ on $S=\mathbb{Q}[x_1,\ldots,x_\ell]$ respectively on subsets of $\Der(S)$.
		Hence, without loss, we may assume that $i=1$.
		
		We argue by induction on $j$. By Lemma \ref{lemma:freedeletion}, the statement is true for $j=3$. 
		Assume that $D(\B_{1,j})$ is generated by 
		$$
		\theta_0,\ldots,\theta_{\ell-3}, \varphi_{2},\varphi_{3},\ldots,\varphi_{j}.
		$$
		We will show that, after deleting $H_{j+1,j+2}$, an additional generator $\varphi_{j+1}$ is necessary, i.e.\ 
		$D(\B_{1,j+1})$ is generated by 
		$$
		\theta_0,\ldots,\theta_{\ell-3}, \varphi_{2},\varphi_{3},\ldots,\varphi_{j}, \varphi_{j+1}.
		$$
		Apparently, we have
		\begin{align*}
			|\B_{1,j+1}| &= |\A_{\ell-1}|-(j+1) = \frac{\ell(\ell-1)}{2}-(j+1),\\
			|\B_{1,j}^{H_{j+1,j+2}}| &= |\A_{\ell-2}|-(j-1) = \frac{(\ell-1)(\ell-2)}{2}-(j-1).\\ 
		\end{align*}
		Thus 
		$$
		\deg B_{j+1}=|\B_{1,j+1}|-|\B_{1,j}^{H_{j+1,j+2}}|
		=\ell-3,
		$$
		where $B_{j+1} = B(\B_{1,j+1},H_{j+1,j+2})$ is Terao's polynomial from Subsection \ref{ssec:PolyB}.
		By definition, it is clear that 
		$\varphi_{j+1} \in D(\B_{1,j+1}) \setminus D(\B_{1,j})$. 
		Consequently, by \Cref{prop:B}, we have $\varphi_{j+1}(x_{j+1}-x_{j+2}) = g(x_{j+1}-x_{j+2}) + cB_{j+1}$
		and for any $\theta \in D(\B_{1,j+1})$ we also have $\theta(x_{j+1}-x_{j+2}) = g'(x_{j+1}-x_{j+2}) + fB_{j+1}$,
		for certain $f,g,g' \in S$ and $c \in \mathbb{Q}^\times$.
		Hence, $$\theta - \frac{f}{c}\varphi_{j+1} \in D(\B_{1,j})=\langle \theta_0,\ldots,\theta_{\ell-3}, \varphi_2,\ldots,\varphi_{j}\rangle_S$$
		by the induction hypothesis, which completes the proof.  
	\end{proof}
	
	We thus see, that if we delete $H_{12},\ldots,H_{\ell-1,\ell}$ from $\A_{\ell-1}$,
	we can determine generators for $D(\B_{1,\ell-1})$, namely 
	$$
	D(\B_{1,\ell-1})=\langle
	\theta_0,\ldots,\theta_{\ell-3},\varphi_2,\ldots,\varphi_
	{\ell-1}\rangle.
	$$ 
	However, the same argument as in Proposition \ref{prop:deletingSPOG} does not work well
	for our target arrangement $\A(C_\ell^C) = \B_{1,\ell} = \B_{1,\ell-1} \setminus \{H_{\ell,1}\}$, since 
	\begin{eqnarray*}
		|\B_{1,\ell}|&=&\displaystyle \frac{\ell(\ell-1)}{2}-\ell,\\
		|\B_{1,\ell-1}^{H_{\ell,1}}|&=&\displaystyle \frac{(\ell-1)(\ell-2)}{2}-(\ell-3).
	\end{eqnarray*}
	So $|\B_{1,\ell}|-|\B_{1,\ell-1}^{H_{\ell,1}}|=\ell-4=\deg B_\ell<\deg(\varphi_\ell) = \deg(\varphi_1) = \ell-3$, 
	where $B_\ell = B(\B_{1,\ell},H_{\ell,1})$ is Terao's polynomial $B$.
	To obtain generators for $\B_{1,\ell}$ we need to modify the argument utilizing the polynomial $B$. 
	For that purpose, we introduce the following new refined version of \Cref{prop:B}.
	
	\begin{theorem}
		Let $\A$ be an arrangement, $H_1, H_2 \not \in \A$ be distinct hyperplanes and let 
		$\A_i:=\A \cup \{H_i\}$. Assume that $H_1 = \ker(\alpha)$, $H_2 = \ker(\beta)$ and let 
		$B_i$ be the polynomial $B$ with respect to $(\A,H_i)$. 
		Assume that 
		$\ker (\alpha+\beta) \in \A$, let $b$ be the greatest common divisor of the reduction of $B_1$ and $B_2$ modulo $(\alpha,\beta)$ 
		and let $b_2b\equiv B_2$ modulo $(\alpha,\beta)$. 
		Then for $\theta \in D(\A)$ we have:
		$$
		\theta(\alpha) 
		\in (\alpha, \beta B_1,b_2B_1).
		$$ 
		\label{thm:idealB}
	\end{theorem}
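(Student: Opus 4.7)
The plan is to apply Item~(3) of \Cref{prop:B} separately to the two triples $(\A_i, \A, \A_i^{H_i})$ for $i=1,2$, and then to couple the resulting containment statements via the extra relation supplied by the assumption $\ker(\alpha+\beta)\in\A$.

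The two applications of \Cref{prop:B}(3) yield $\theta(\alpha)\in(\alpha,B_1)$ and $\theta(\beta)\in(\beta,B_2)$, so one may write
\[
\theta(\alpha)=\alpha p + B_1 q,\qquad \theta(\beta)=\beta r + B_2 s,
\]
for some $p,q,r,s\in S$. Since $\ker(\alpha+\beta)\in\A$ and $\theta\in D(\A)$, the standard consequence of the definition of $D(\A)$ (via the Leibniz rule applied to $Q(\A)$) gives $(\alpha+\beta)\mid\theta(\alpha+\beta)$, and in particular $\theta(\alpha)+\theta(\beta)\in(\alpha,\beta)$. Substituting the two expressions above and absorbing $\alpha p$ and $\beta r$ into $(\alpha,\beta)$ yields $B_1 q + B_2 s\in(\alpha,\beta)$.

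I then pass to the quotient $\bar S := S/(\alpha,\beta)$. Since $\alpha,\beta\in V^*$ are linearly independent, $\bar S$ is a polynomial ring in $\ell-2$ variables, hence a UFD. The containment reduces to $\bar B_1 \bar q + \bar B_2 \bar s = 0$ in $\bar S$. Writing $\bar B_1 = b_1 b$ and $\bar B_2 = b_2 b$ with $\gcd(b_1,b_2)=1$, cancelling the common factor $b$, and invoking unique factorisation, I obtain $b_2\mid\bar q$. Lifting a cofactor of $\bar q / b_2$ to an element $u\in S$ gives $q = b_2 u + \alpha v + \beta w$ for some $v,w\in S$, and substituting back,
\[
\theta(\alpha) = \alpha(p+B_1 v) + b_2 B_1 u + \beta B_1 w \in (\alpha,\,\beta B_1,\,b_2 B_1),
\]
as claimed.

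The only point that requires a little care is the cancellation of $b$ in $\bar S$: one must check $b\neq 0$, or else treat the degenerate case $\bar B_1=\bar B_2=0$ separately (setting $b_2 := 1$ by convention, in which case the claim collapses to $\theta(\alpha)\in(\alpha,B_1)$, already known from \Cref{prop:B}). Beyond this housekeeping, the argument is a clean combination of the two $B$-relations with the single linear syzygy provided by the hyperplane $H_{\alpha+\beta}\in\A$.
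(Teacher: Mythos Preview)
Your proof is correct and follows essentially the same approach as the paper's: write $\theta(\alpha)=\alpha p+B_1 q$ via \Cref{prop:B}, use $\ker(\alpha+\beta)\in\A$ together with $\theta(\beta)\in(\beta,B_2)$ to obtain $B_1 q\equiv -B_2 s \pmod{(\alpha,\beta)}$, and then deduce $b_2\mid \bar q$ in $\bar S$ from the factorisation $\bar B_i=b_ib$. Your version is in fact slightly more careful than the paper's, since you make explicit the UFD argument in $\bar S=S/(\alpha,\beta)$ and address the degenerate case $\bar B_1=\bar B_2=0$.
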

	\begin{proof} 
		Let 
		$$
		\theta(\alpha)=f\alpha+FB_1
		$$
		for some $f,F \in S$.
		Note that 
		$$
		\theta(\alpha)=\theta(\alpha+\beta)-\theta(\beta).
		$$
		So 
		$$
		\theta(\alpha)=g(\alpha+\beta)+h\beta+aB_2
		$$
		for some $g,h,a \in S$, and thus, we have 
		$$
		f\alpha+FB_1=g(\alpha+\beta)+h\beta+aB_2.
		$$
		Reducing the equation modulo $\alpha$, we obtain 
		$$
		FB_1 \equiv g\beta + h\beta + aB_2 \mod (\alpha).
		$$
		Reducing once more modulo $\beta$, we get 
		$$
		FB_1 \equiv aB_2 \mod (\alpha,\beta).
		$$
		Let $B_i \equiv bb_i \mod (\alpha,\beta)$. Then 
		$$
		F=F_1 b_2+F_2 \beta+F_3 \alpha
		$$
		for some $F_1,F_2,F_3 \in S$.
		Hence
		$$
		\theta(\alpha) \in 
		(\alpha, \beta B_1,b_2B_1),
		$$
		which completes the proof. 
	\end{proof}
	
	We can apply Theorem \ref{thm:idealB} to $\B_{1,\ell-1}$ and $\B:= \B_{1,\ell} = \A_{\ell-1} \setminus \{H_{1,2},\ldots,H_{\ell-1,\ell},H_{\ell,1}\}$. 
	Namely, we can show the following:
	
	\begin{theorem}
		$$
		D(\B)=\langle\theta_0,\ldots, \theta_{\ell-3},\varphi_1,\ldots,\varphi_\ell \rangle_S.
		$$
		\label{thm:generator}
	\end{theorem}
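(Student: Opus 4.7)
The plan is to establish the containment $D(\B) \subseteq \langle \theta_0, \ldots, \theta_{\ell-3}, \varphi_1, \ldots, \varphi_\ell\rangle_S$; the reverse containment is immediate, since each $\varphi_j$ vanishes by construction on every hyperplane of $\B$. To reduce to a statement about a hypersurface quotient, consider the $B$-sequence associated with the deletion of $H_{\ell,1}$ from $\B_{1,\ell-1}$:
\begin{equation*}
0 \to D(\B_{1,\ell-1}) \hookrightarrow D(\B) \xrightarrow{\overline{\partial'}} \bar{S}\,\bar{B}_1,
\end{equation*}
where $\bar S := S/(x_\ell - x_1)$ and $B_1 := B(\B, H_{\ell,1})$ has degree $\ell - 4$. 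By \Cref{prop:deletingSPOG} the kernel is spanned by $\theta_0, \ldots, \theta_{\ell-3}, \varphi_2, \ldots, \varphi_{\ell-1}$, so the task reduces to showing that the image of $\overline{\partial'}$ is generated as an $\bar S$-module by $\overline{\varphi_1(x_\ell - x_1)}$ and $\overline{\varphi_\ell(x_\ell - x_1)}$.

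A direct computation yields
\begin{equation*}
\overline{\varphi_1(x_\ell - x_1)} = -\prod_{i=3}^{\ell-1}(x_1 - x_i), \qquad \overline{\varphi_\ell(x_\ell - x_1)} = \prod_{i=2}^{\ell-2}(x_1 - x_i),
\end{equation*}
which share the common factor $\prod_{i=3}^{\ell-2}(x_1 - x_i)$ of degree $\ell - 4 = \deg \bar B_1$. A first step is thus to identify $\bar B_1$ as a nonzero scalar multiple of this common factor. This can be carried out explicitly by expanding the definition of $B_1$ with the choice $\nu(X) = H_{i,1}$ at each rank-two flat $X = \{x_1 = x_\ell = x_i\}$ for $i \in \{3, \ldots, \ell-2\}$, where $H_{i,1}$ and $H_{i,\ell}$ both restrict to the same element of $\B_{1,\ell-1}^{H_{\ell,1}}$; all remaining factors of $Q(\B_{1,\ell-1})/\alpha_{H_{\ell,1}}$ cancel against $\prod_X \alpha_{\nu(X)}$. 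Granting this, the $\bar S$-submodule of $\bar S \bar B_1$ generated by $\overline{\varphi_1(x_\ell - x_1)}$ and $\overline{\varphi_\ell(x_\ell - x_1)}$ equals $\bar B_1 \cdot (x_1 - x_2,\, x_1 - x_{\ell-1})$.

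For the reverse inclusion, that the image of $\overline{\partial'}$ lies in this submodule, we invoke \Cref{thm:idealB} twice, with $\A = \B$, $H_1 = H_{\ell,1}$, and auxiliary hyperplane $H_2 = H_{1,2}$ in the first instance and $H_2 = H_{\ell-1,\ell}$ in the second. In both cases the hypothesis $\ker(\alpha + \beta) \in \B$ is satisfied (it gives $H_{\ell,2}$ and $H_{\ell-1,1}$, both in $\B$ for $\ell \geq 6$). Each application yields an inclusion of the form $\overline{\theta(x_\ell - x_1)} \in \bar B_1 \cdot (\bar\beta, \bar b_2)$ for $\theta \in D(\B)$, leading overall to
\begin{equation*}
\overline{\theta(x_\ell - x_1)} \in \bar B_1 \cdot (x_1 - x_2,\, \bar b_2) \; \cap \; \bar B_1 \cdot (x_1 - x_{\ell-1},\, \bar b'_2),
\end{equation*}
after reducing $x_{\ell-1} - x_\ell$ modulo $x_\ell - x_1$. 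The technical heart of the proof—and the principal obstacle—is to verify that this intersection collapses to $\bar B_1 \cdot (x_1 - x_2,\, x_1 - x_{\ell-1})$; this is expected to follow from an analogous explicit description of $\bar B_2$ (and its primed counterpart) and the resulting quotients $\bar b_2, \bar b'_2$, exploiting the cyclic symmetry of the antihole. Once this ideal equality is in hand, a standard lift argument shows that every $\theta \in D(\B)$ differs from an element of $D(\B_{1,\ell-1})$ by an $S$-linear combination of $\varphi_1$ and $\varphi_\ell$, and the proof concludes by \Cref{prop:deletingSPOG}.
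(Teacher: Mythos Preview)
Your overall strategy coincides with the paper's: reduce to a known $D(\B_{i,j})$ via \Cref{prop:deletingSPOG} and use \Cref{thm:idealB} to identify the extra generators needed after the last deletion. The only difference is a cyclic relabeling (you delete $H_{\ell,1}$ where the paper deletes $H_{12}$).

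However, you take an unnecessary detour and leave an artificial gap. You invoke \Cref{thm:idealB} \emph{twice} and then declare the resulting ideal intersection ``the principal obstacle'', left unverified. In fact a \emph{single} application already yields the ideal you want, once you actually compute $\bar b_2$. With $\alpha = x_\ell - x_1$ and $\beta = x_1 - x_2$, the second $B$-polynomial is $B_2 = B(\B,H_{12}) = \prod_{j=4}^{\ell-1}(x_1-x_j)$; reducing modulo $(x_\ell-x_1,\,x_1-x_2)$ (so $x_1=x_2=x_\ell$), its common factor with your $\bar B_1 = \prod_{j=3}^{\ell-2}(x_1-x_j)$ is $\prod_{j=4}^{\ell-2}(x_1-x_j)$, whence $\bar b_2 = x_1 - x_{\ell-1}$. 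Thus \Cref{thm:idealB} gives directly
\[
\overline{\theta(x_\ell-x_1)} \;\in\; \bar B_1\cdot(\bar\beta,\bar b_2) \;=\; \bar B_1\cdot(x_1-x_2,\,x_1-x_{\ell-1}),
\]
and since $(x_1-x_2)\bar B_1 = \overline{\varphi_\ell(x_\ell-x_1)}$ and $(x_1-x_{\ell-1})\bar B_1 = -\overline{\varphi_1(x_\ell-x_1)}$, you are done. The second application and the intersection are superfluous; the ``obstacle'' dissolves as soon as $b_2$ is written down. This is exactly the paper's argument, where the single application (with $\alpha=x_1-x_2$, $\beta=x_2-x_3$) produces $\varphi_1$ and $\varphi_2$ as the two new generators over $D(\B_{2,\ell})$.
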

	
	\begin{proof}
		Let 
		$\C_1:=\B \cup \{H_{12}\}$ and $\C_2:=\B \cup \{H_{23}\}$.
		Set
		$$
		B_1=\prod_{j=4}^{\ell-1} (x_1-x_j)
		$$
		for the polynomial $B$ for the pair $(\B,H_{12})$ and 
		$$
		B_2=\prod_{j=5}^{\ell} (x_2-x_j)
		$$
		for the polynomial $B$ of $(\B,H_{23})$.
		Note that $H_{i,i+1} \not \in \B$ $(i=1,2)$ and 
		$(x_1-x_2)+
		(x_2-x_3) =x_1-x_3$, whose kernel is in $\B$. 
		Moreover, after reduction modulo $x_1=x_2=x_3$, we have 
		$$
		B_1 \equiv \prod_{j=4}^{\ell-1} (x_1-x_j),\ 
		B_2 \equiv \prod_{j=5}^{\ell} (x_1-x_j),
		$$
		and their common divisor is $\prod_{j=5}^{\ell-1} (x_1-x_j)$. 
		Then, Theorem \ref{thm:idealB} yields for $\theta \in D(\B)$ 
		\begin{eqnarray*}
			\theta(x_1-x_2) &\in& (x_1-x_2,(x_2-x_3)B_2,(x_2-x_\ell)B_1)=(x_1-x_2,(x_1-x_3)B_2,(x_1-x_\ell)B_1)\\
			&=&(x_1-x_2,\varphi_1(x_1-x_2),\varphi_2(x_1-x_2) ),
		\end{eqnarray*}
		where we used the fact that 
		$$
		(x_2-x_3)B_2 \equiv (x_1-x_3)B_2,\ 
		(x_2-x_\ell)B_1 \equiv (x_1-x_\ell)B_1 \mod(x_1-x_2). 
		$$
		Thus 
		\begin{align*}
			D(\B) &= D(\C_1)+S\varphi_1+S\varphi_2
			= \langle \theta_0,\ldots,\theta_{\ell-3},\varphi_1,\ldots,\varphi_\ell \rangle_S,
		\end{align*}
		by \Cref{prop:deletingSPOG}.
	\end{proof}

	Note that $\psi_{i}:=(x_{i-1}-x_i)\varphi_i-(x_{i+1}-x_{i+2})\varphi_{i+1}  \in 
	D(\A_{\ell-1}) = \langle \theta_0,\ldots,\theta_{\ell-1}\rangle_S$ for $i=1,\ldots,\ell$,
	since
	\[
	\psi_i(x_i-x_{i+1}) = -\prod_{j \in [\ell]\setminus \{i,i+1\}}(x_i-x_j) + \prod_{j \in [\ell]\setminus \{i,i+1\}}(x_{i+1}-x_j) \equiv 0 \mod(x_i-x_{i+1}).
	\]
	Thus, there are $f_{ij}$ such that
	\begin{align}
		\label{eq:relPsiTheta}
		\psi_i-\sum_{j=0}^{\ell-3} f_{ij} \theta_j =-\theta_{\ell-2} \ (i=1,2,\ldots,\ell).
	\end{align}
	So we have relations 
	$$
	\psi_i-\sum_{j=0}^{\ell-3} f_{ij} \theta_j=
	\psi_s-\sum_{j=0}^{\ell-3} f_{sj} \theta_s,
	$$
	and they are generated by 
	\begin{equation}
		\psi_1-\sum_{j=0}^{\ell-3} f_{1j} \theta_j =
		\psi_{i}-\sum_{j=0}^{\ell-3} f_{ij} \theta_j
		\label{eq}
	\end{equation}
	for $i=2,\ldots,\ell$. 
	We now prove that they indeed generate all the relations among the generators of $D(\B)$.
	
	\begin{theorem}\label{thm:rel}
        All relations among the set of generators $\theta_0,\ldots,\theta_{\ell-3},\varphi_1,\ldots,\varphi_\ell$
        are generated by the ones given in Equations \eqref{eq}.
	\end{theorem}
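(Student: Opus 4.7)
My plan to prove \Cref{thm:rel} has two parts: verifying that the $\ell-1$ elements in~\eqref{eq} are syzygies, then proving they generate the entire first syzygy module.

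\textbf{Part 1.} By~\eqref{eq:relPsiTheta}, for each $i=1,\dots,\ell$, $\psi_i-\sum_{j=0}^{\ell-3} f_{ij}\theta_j = -\theta_{\ell-2}$ in $D(\A_{\ell-1})\subseteq D(\B)$, with the right-hand side independent of $i$. Subtracting the $i=1$ identity from the $i$-th and expanding $\psi_i=(x_{i-1}-x_i)\varphi_i-(x_{i+1}-x_{i+2})\varphi_{i+1}$ yields the claimed $S$-linear relations among the generators $\theta_0,\ldots,\theta_{\ell-3},\varphi_1,\ldots,\varphi_\ell$.

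\textbf{Part 2.} To show these $\ell-1$ relations generate the full first syzygy module, I would construct a free resolution of $D(\B)$ via iterated mapping cone and count Betti numbers. Beginning with Saito's free resolution of $D(\A_{\ell-1})$ (\Cref{thm:saito}, no syzygies), I delete the cycle edges other than $H_{1,2}$ in succession, producing the chain $\A_{\ell-1}\supset\cdots\supset\C_1=\B_{2,\ell}$. After the initial deletions (which preserve freeness by \Cref{lemma:freedeletion}), each subsequent deletion triggers a right-exact B-sequence $0\to D(\A)\to D(\A')\to\bar S\bar B\to 0$ via \Cref{thm:surjective}. Combining with the length-one resolution $0\to S\to S\to\bar S\bar B\to 0$, the mapping cone yields a free resolution of $D(\A')$ with one new generator (of degree $\ell-3$) and one new syzygy (of degree $\ell-2$). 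Iterating, I obtain a resolution of $D(\C_1)$ with $2\ell-4$ generators and $\ell-4$ syzygies. The final deletion of $H_{1,2}$ does not fit the single-hyperplane B-sequence, since $\deg B=\ell-4<\ell-3=\deg\varphi_1$; instead, one uses the two-hyperplane refinement \Cref{thm:idealB} exactly as in the proof of \Cref{thm:generator}. A careful mapping-cone analysis of the resulting three-generated ideal sequence introduces the two new generators $\varphi_1,\varphi_2$ at the cost of exactly three new syzygies, for a total of $\ell-1$. This matches the number of explicit syzygies in~\eqref{eq}, so they must generate.

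\textbf{Main obstacle.} The chief technical difficulty is the last step: unlike the clean mapping cone for a single B-sequence, the two-hyperplane refinement \Cref{thm:idealB} produces an ideal with three generators, complicating the Betti-number bookkeeping. One must verify that \emph{exactly} three new syzygies—and no more—arise, rather than overcounting from the redundancy among the three ideal generators. A secondary verification is that the hypothesis of \Cref{thm:surjective} holds at each intermediate step; this reduces, for graphic arrangements, to a combinatorial condition on the induced subgraphs of the relevant restrictions, which can be checked using the chordality and weak chordality criteria of \Cref{thm:ChordalFree} and \Cref{thm:pdWeaklyChordal1}.
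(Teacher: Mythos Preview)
Your strategy is genuinely different from the paper's and, as written, has real gaps.

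\medskip
\textbf{What the paper does.} The paper argues directly. Given an arbitrary syzygy $\sum a_i\theta_i+\sum b_i\varphi_i=0$, one applies both sides to $x_k-x_{k+1}$; since every $\theta_i$ lies in $D(\A_{\ell-1})$, the $\varphi$-part alone must vanish modulo $x_k-x_{k+1}$. This forces
\[
b_i=(x_i-x_{i+1})g_{i-1,i}+(x_{i-1}-x_i)g_{i,i+1}+(x_{i-1}-x_i)(x_i-x_{i+1})h_i
\]
for suitable polynomials. Substituting back and using that each $\psi_j$ and each $(x_{j-1}-x_j)(x_j-x_{j+1})\varphi_j$ already lie in $D(\A_{\ell-1})$, one applies the relations~\eqref{eq} to rewrite the syzygy as a relation among $\theta_0,\ldots,\theta_{\ell-3},\psi_1,\theta_{\ell-1}$. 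These form a basis of the free module $D(\A_{\ell-1})$ by \eqref{eq:relPsiTheta} and \Cref{thm:saito}, so all coefficients vanish. No Betti-number counting is needed.

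\medskip
\textbf{Where your plan breaks.} Your mapping-cone chain works for the intermediate deletions, but the final step is a genuine obstruction, not merely the degree mismatch you flag. Take $X\in L_{2}(\C_1^{H_{12}})$ corresponding to the vertices $\{1,2,3,\ell\}$: the induced subgraph in $\C_1=\B_{2,\ell}$ has exactly the edges $\{1,2\},\{1,3\},\{2,\ell\},\{3,\ell\}$, an induced $C_4$. Hence $(\C_1)_X$ is not free, the hypothesis of \Cref{thm:surjective} fails, and indeed the $B$-sequence for $(\C_1,\B)$ is \emph{not} right-exact (the image is generated in degree $\ell-3$ while $\bar B$ has degree $\ell-4$). \Cref{thm:idealB} gives only an ideal \emph{containing} $\theta(\alpha)$; it does not hand you a short exact sequence to which a mapping cone applies. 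A correct repair is to use \Cref{thm:generator} to identify the cokernel $D(\B)/D(\C_1)$ with the ideal $\prod_{i=4}^{\ell-1}(x_1-x_i)\cdot(x_1-x_3,\,x_1-x_\ell)\subset \bar S$, resolve that over $S$ (Koszul on the regular sequence, then change of rings), and only then take a mapping cone; this yields two new generators, three new first syzygies, and one second syzygy. You assert this outcome but do not establish it.

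\medskip
\textbf{A second gap.} Even granting $\beta_1(D(\B))\le \ell-1$ from the cone, the line ``this matches the number of explicit syzygies, so they must generate'' is not a proof. You must still show the $\ell-1$ relations in~\eqref{eq} are $\KK$-linearly independent modulo $\mathfrak m\cdot\mathrm{Syz}_1$ (equivalently, independent in degree $\ell-2$ after checking there are no syzygies of smaller degree), and then invoke Nakayama. This is routine but cannot be omitted.

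\medskip
In short: the paper's argument is elementary and self-contained; your homological route could be completed and would recover the Betti numbers of \Cref{thm:antihole} in one stroke, but the two hardest steps of your Part~2 are currently assertions rather than arguments.
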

	
	\begin{proof}
		Let 
		\[
		\eta:\quad \sum_{i=0}^{\ell-3} a_i \theta_i+\sum_{i=1}^\ell b_i \varphi_i=0
		\] 
		be a relation. 
		Since $\theta_i \in D(\A_{\ell-1})$ ($0\leq i \leq \ell-3$), we see that $(\sum_{i=1}^\ell b_i \varphi_i)(x_1-x_2)$ is divisible by $x_1-x_2$, i.e., 
		$$
		b_1\prod_{i=3}^{\ell-1} (x_1-x_i)-
		b_2\prod_{i=4}^{\ell} (x_2-x_i)
		$$
		is divisible by $x_1-x_2$. So 
		$$
		b_1(x_2-x_3)\equiv b_2(x_1-x_\ell) \mod (x_1-x_2).
		$$
		Hence, there are polynomials $g_{12},h_1,h_2 \in S$ such that
		\begin{align*}
			b_1 &= (x_1-x_\ell) g_{12}+(x_1-x_2)h_1,\\
			b_2 &= (x_2-x_3) g_{12}+(x_1-x_2)h_2.
		\end{align*}  
		Apply the same argument to 
		$(\sum_{i=1}^\ell b_i \varphi_i)(x_2-x_3)$ to obtain polynomials $g_{23} \in S$ such that 
		$$
		b_2=(x_2-x_3) g_{12}+(x_1-x_2)g_{23}+(x_2-x_3)(x_1-x_2)h_2.
		$$
		Continuing these processes, we know that 
		$$
		b_i=(x_{i}-x_{i+1}) g_{i-1,i}+(x_{i-1}-x_i)g_{i,i+1}+(x_{i-1}-x_i)(x_{i}-x_{i+1})h_i
		$$
		for some polynomials $g_{i-1,i}, g_{i,i+1}, h_i \in S,\ i=1,\ldots,\ell$. 
		
		Substituting this information into our relation $\eta$, we obtain
		\[
		\eta:\quad  \sum_{i=0}^{\ell-3} a_i \theta_i+\sum_{j=1}^\ell c_j\psi_j + (x_{j-1}-x_j)(x_{j}-x_{j+1})h_j \varphi_j=0,
		\]
		for some $a_i,c_j ,h_j \in S$.
		Applying our relations from Equations \eqref{eq},
		we get a relation of the form
		\[
		\sum_{i=0}^{\ell-3} t_i \theta_i+ t_{\ell-2}\psi_1 + \sum_{j=1}^\ell (x_{j-1}-x_j)(x_{j}-x_{j+1})h_j \varphi_j=0,
		\]
		Note that by Equations \eqref{eq:relPsiTheta} we have $\psi_1 = -\theta_{\ell-2} - \sum_{j=0}^{\ell-3} f_{1j} \theta_j$,
		and since $(x_{i-1}-x_i)(x_{i}-x_{i+1})\varphi_i \in D(\A_{\ell-1})$,
		we thus have $(x_{i-1}-x_i)(x_{i}-x_{i+1})\varphi_i = -\theta_{\ell-1} + \sum_{j=0}^{\ell-3} c_{ij} \theta_i-c_{\ell-2,i} \psi_1$
		for suitable $c_{ij} \in S$.
		Applying this last substitution to our relation, we now expressed $\eta$ as
		
		$$
		\sum_{i=0}^{\ell-3} t_i \theta_i+t_{\ell-2} \psi_1+t_{\ell-1} \theta_{\ell-1}=0.
		$$ 
		Since $\theta_0,\ldots,\theta_{\ell-3},\psi_1,\theta_{\ell-1}$ are linearly independent (in fact,
		by Equation \eqref{eq:relPsiTheta} they form a basis for $D(\A_{\ell-1})$),
		we have $t_i=0$ for all $i$.
		Consequently, all the relations among the above generators of $D(\B)$ are expressible using Equations (\ref{eq}).
	\end{proof}
	\medskip

	Recall that 
	$$
	(x_{i-1}-x_i)\varphi_i-(x_{i+1}-x_{i+2})\varphi_{i+1} 
	\in D(\A_{\ell-1} ),
	$$
	for $i=1,\ldots, \ell$
	and there are $f_{ij}$ such that 
	\begin{equation}
		\label{eq:ESP}
		(x_{i-1}-x_i)\varphi_i-(x_{i+1}-x_{i+2})\varphi_{i+1} +
		\sum_{j=0}^{\ell-3} f_{ij} \theta_j=-\theta_{\ell-2}.
	\end{equation}
	First let us show the following.
	
	\begin{lemma}
		The coefficients in relations (\ref{eq:ESP}) are given explicitly by 
		$$
		f_{ij}=(-1)^{\ell-2-j}e_{\ell-2-j}(x_1,\ldots,\hat{x}_i,\hat{x}_{i+1},\ldots,x_\ell),
		$$
		where $e_i(a_1,\ldots,a_{\ell-2})$ is the $i$-th basic symmetric polynomial in the variables 
		$a_1,\ldots,a_{\ell-2}$.
		\label{lemma:sp}
	\end{lemma}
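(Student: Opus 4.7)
The plan is to compare the two sides of~\eqref{eq:ESP} in the standard basis $\ddxi{1},\dots,\ddxi{\ell}$ of $\Der(S)$ and reduce the question to a single polynomial identity in $S[T]$.

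First I would compute $\psi_i := (x_{i-1}-x_i)\varphi_i - (x_{i+1}-x_{i+2})\varphi_{i+1}$ explicitly. The factor $(x_{i-1}-x_i)$ absorbs into the product defining $\varphi_i$ to complete the missing index $i-1$ (producing a sign flip via $(x_{i-1}-x_i)=-(x_i-x_{i-1})$), and symmetrically $(x_{i+1}-x_{i+2})$ absorbs into $\varphi_{i+1}$ to fill in the missing index $i+2$ with no sign flip. This yields
\[
\psi_i \;=\; -\prod_{k\in[\ell]\setminus\{i,i+1\}}(x_i-x_k)\,\ddxi{i} \;-\; \prod_{k\in[\ell]\setminus\{i,i+1\}}(x_{i+1}-x_k)\,\ddxi{i+1},
\]
so $\psi_i$ has nonzero $\ddxi{k}$-coefficient only for $k\in\{i,i+1\}$.

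Next I would introduce the auxiliary monic polynomial $P_i(T) := T^{\ell-2} + \sum_{j=0}^{\ell-3} f_{ij}\, T^j \in S[T]$ and rewrite~\eqref{eq:ESP} as $\sum_{j=0}^{\ell-3} f_{ij}\theta_j = -\psi_i - \theta_{\ell-2}$. Since $\theta_j = \sum_{k}x_k^j\,\ddxi{k}$, the $\ddxi{k}$-coefficient of the left-hand side equals $P_i(x_k) - x_k^{\ell-2}$, while the corresponding right-hand coefficient is $-x_k^{\ell-2}$ whenever $k\notin\{i,i+1\}$. Hence $P_i(x_k) = 0$ for all $k\in[\ell]\setminus\{i,i+1\}$. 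As $P_i(T)$ is monic of degree $\ell-2$ over $S$ with $\ell-2$ pairwise distinct roots $x_k$, a standard Lagrange/factor-theorem argument (passing to $\operatorname{Frac}(S)$ if desired, noting that all differences $x_k-x_{k'}$ are non-zerodivisors) forces
\[
P_i(T) \;=\; \prod_{k\in[\ell]\setminus\{i,i+1\}}(T-x_k).
\]

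Expanding by Vieta's formulas and matching the coefficient of $T^j$ for $0\le j\le\ell-3$ produces exactly $f_{ij} = (-1)^{\ell-2-j}\,e_{\ell-2-j}(x_1,\dots,\hat{x}_i,\hat{x}_{i+1},\dots,x_\ell)$. Consistency on the two remaining basis directions $\ddxi{i},\ddxi{i+1}$ is then automatic: the values $P_i(x_i)=\prod_{k}(x_i-x_k)$ and $P_i(x_{i+1})=\prod_k(x_{i+1}-x_k)$ produced by the formula match the coefficients of $\psi_i$ computed in the first step. I do not foresee any serious obstacle; the only care required is sign bookkeeping when absorbing the linear factors into each $\varphi$, and the conceptual point is that the identity in $\Der(S)$ decouples direction by direction into a single polynomial identity in $S[T]$, which is pinned down by its $\ell-2$ vanishing conditions outside the indices $i,i+1$.
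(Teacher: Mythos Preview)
Your argument is correct. The paper's own proof consists of the single sentence ``The straightforward computation is left to the reader,'' so there is nothing to compare against; your reduction to a monic polynomial $P_i(T)\in S[T]$ of degree $\ell-2$ with the $\ell-2$ prescribed roots $x_k$ ($k\notin\{i,i+1\}$), followed by Vieta, is exactly the kind of computation the authors had in mind and the natural explanation for why elementary symmetric polynomials appear. Your final consistency check at the directions $\ddxi{i},\ddxi{i+1}$ is a nice touch, as it makes the argument self-contained (it re-establishes the existence of the $f_{ij}$ rather than relying on the observation $\psi_i\in D(\A_{\ell-1})$ made just before~\eqref{eq:ESP}).
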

	
	\begin{proof} 
		The straightforward computation is left to the reader.
	\end{proof}
	
	Now we are ready to prove the following, which immediately implies Theorem \ref{thm:AntiholePD2}.
	
	\begin{theorem}
		The module $D(\B)$ has the following minimal free resolution:
		\begin{equation}
			\label{eq:min}
			0 \rightarrow S[-\ell+1] \rightarrow 
			S[-\ell+2]^{\ell-1} \rightarrow 
			\oplus_{i=0}^{\ell-4} S[-i] 
			\oplus S[-\ell+3]^{\ell+1} \rightarrow D(\B) \rightarrow 0.
		\end{equation}
		In particular, $\pd(\B) =2$.
		\label{thm:antihole}
	\end{theorem}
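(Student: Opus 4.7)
My plan is to construct the complex in~\eqref{eq:min} explicitly and then verify exactness together with minimality. The rightmost module $F_0 := \bigoplus_{i=0}^{\ell-4} S[-i] \oplus S[-\ell+3]^{\ell+1}$ has canonical basis mapping to the $2\ell-2$ generators $\theta_0, \ldots, \theta_{\ell-3}, \varphi_1, \ldots, \varphi_\ell$ from Theorem~\ref{thm:generator} (the shifts match since $\deg\theta_i = i$ and $\deg\varphi_j = \ell-3$). The middle module $F_1 := S[-\ell+2]^{\ell-1}$ has basis elements mapping to the syzygies
\[ R_i := \psi_1 - \psi_i - \sum_{j=0}^{\ell-3}(f_{1j}-f_{ij})\theta_j \in F_0 \]
for $i = 2, \ldots, \ell$, each of degree $\ell-2$ by Equation~\eqref{eq}. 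Finally $F_2 := S[-\ell+1]$ has generator mapping to $\sigma := (x_2-x_3,\,x_3-x_4,\,\ldots,\,x_\ell-x_1) \in F_1$. Exactness at $F_0$ is precisely Theorem~\ref{thm:generator}, and exactness at $F_1$ is Theorem~\ref{thm:rel}, so the heart of the matter is exactness at $F_2$ and minimality.

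The key step is to prove that any $(a_2, \ldots, a_\ell) \in S^{\ell-1}$ with $\sum_{i=2}^\ell a_i R_i = 0$ is a scalar multiple of $\sigma$. Setting $a_1 := -\sum_{i=2}^\ell a_i$ so that $\sum_{i=1}^\ell a_i = 0$, the projection of the equation onto the $\varphi$-components of $F_0$ becomes $\sum_{i=1}^\ell a_i \psi_i = 0$, and grouping coefficients of each $\varphi_k$ yields the cyclic system $a_k(x_{k-1}-x_k) = a_{k-1}(x_k - x_{k+1})$ for $k = 1, \ldots, \ell$. Since distinct linear forms $x_s - x_t$ generate coprime prime ideals in $S$, the divisibility $(x_k - x_{k+1}) \mid a_k(x_{k-1}-x_k)$ forces $a_k = b_k (x_k - x_{k+1})$ for some $b_k \in S$; substituting back and cancelling the non-zero-divisor $(x_{k-1}-x_k)$ then gives $b_k = b_{k-1}$ for all $k$, so $a_i = b(x_i - x_{i+1})$ for a single $b \in S$.

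It remains to verify that the $\theta_j$-components of $\sum a_i R_i$ automatically vanish under this constraint, which simultaneously confirms that $\sigma$ itself lies in $\ker(F_1 \to F_0)$. Both reduce to the identity $\sum_{i=1}^\ell (x_i - x_{i+1}) f_{ij} = 0$ for $j = 0, \ldots, \ell-3$. Using Lemma~\ref{lemma:sp}, this becomes $\sum_i (x_i - x_{i+1}) e_s^{(i)} = 0$, where $e_s^{(i)} := e_s(x_1, \ldots, \hat{x}_i, \hat{x}_{i+1}, \ldots, x_\ell)$. The decomposition $x_i - x_{i+1} = (T - x_{i+1}) - (T - x_i)$ yields the telescoping identity
\[\sum_{i=1}^\ell (x_i - x_{i+1}) \prod_{k \neq i, i+1}(T-x_k) = \sum_i \prod_{k \neq i}(T-x_k) - \sum_i \prod_{k \neq i+1}(T-x_k) = 0,\]
and extracting the coefficient of $T^{\ell-2-s}$ gives the required relation for each $s$.

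Finally, $F_2 \to F_1$ is injective since $S$ is a domain and $\sigma$ is non-zero, and the resolution is minimal because every matrix entry of $F_1 \to F_0$ has positive degree ($\ell-2-j$ for the $\theta_j$-rows and $1$ for the $\varphi$-rows) and every matrix entry of $F_2 \to F_1$ is linear. Hence~\eqref{eq:min} is the minimal graded free resolution of $D(\B)$ of length $2$, giving $\pd(D(\B)) = 2$ and thereby Theorem~\ref{thm:AntiholePD2}. The main technical obstacle is to coordinate the two parts of the syzygy analysis: the $\varphi$-part forces the shape $a_i = b(x_i - x_{i+1})$ via the coprimality argument, while consistency of the $\theta$-part requires the symmetric-function telescoping identity above.
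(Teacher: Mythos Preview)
Your argument is correct and follows essentially the same route as the paper: determine the second syzygy by reading off the $\varphi$-coefficients (coprimality forces $a_i=b(x_i-x_{i+1})$), then verify the $\theta$-coefficients vanish, and conclude minimality from positive-degree entries. Two small remarks: your indexing of ``exactness at $F_i$'' is off by one throughout (surjectivity onto $D(\B)$ is exactness at $D(\B)$, Theorem~\ref{thm:rel} gives exactness at $F_0$, and the second-syzygy computation is exactness at $F_1$); and your generating-function telescoping proof of $\sum_i (x_i-x_{i+1})e_s^{(i)}=0$ is genuinely slicker than the paper's inductive computation via partial derivatives---it gives all $s$ at once and makes the identity transparent.
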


	\begin{proof}
		First we prove the second syzygy part.
		Let 
		$$
		e_j^{i}:
		=e_{j}(x_1,\ldots,\hat{x}_i,\hat{x}_{i+1},\ldots,x_\ell).
		$$
		As we have shown in \Cref{lemma:sp}, the relations among the generators 
		$$
		\theta_0,\ldots,\theta_{\ell-3},\varphi_1,\ldots,\varphi_\ell
		$$
		are the following:
		\begin{eqnarray*}
			(x_{\ell}-x_1)\varphi_1&-&(x_{2}-x_{3})\varphi_{2}
			-\sum_{j=0}^{\ell-3} 
			(-1)^{\ell-2-j}e_{\ell-2-j}^1\theta_j\\
			=(x_{i-1}-x_i)\varphi_i&-&(x_{i+1}-x_{i+2})\varphi_{i+1}
			-\sum_{j=0}^{\ell-3} 
			(-1)^{\ell-2-j}e_{\ell-2-j}^i\theta_j
		\end{eqnarray*}
		for $i=2,\ldots,\ell$. 
		Let us denote these relations by $\psi_i$ for $i=2,\ldots,\ell$. 
		Since we are now concerned with the relations among those first syzygies, 
		from now on we consider all first syzygies as vectors with coordinates with respect to
		$\theta_0,\ldots,\theta_{\ell-3},\varphi_1,\ldots,\varphi_\ell$.
		
		Now let 
		$$
		\sum_{i=2}^\ell a_i \psi_i=0
		$$
		be a relation among our generators of the first syzygy module.
		Since the coefficients of $\varphi_i$ for $ 3 \le i \le \ell$ are 
		only $x_i-x_{i+1}$ in $\psi_{i-1}$ and $x_{i-1}-x_i$ in 
		$\psi_i$, we can deduce that 
		$$
		a_i=A(x_i-x_{i+1})
		$$
		for $i=2,\ldots,\ell$ and some constant polynomial $A \in S$. So the relations among $\psi_i$'s have to be of the form 
		$$
		A\sum_{i=2}^\ell (x_i-x_{i+1})\psi_i=0.
		$$
		Let us check whether other coefficients are zero or not.
		First, we consider the one of $\varphi_1$, that is 
		$$
		(x_\ell-x_1)(\sum_{i=2}^\ell (x_i-x_{i+1})+(x_1-x_2))=0.
		$$
		Second, for $\varphi_2$, we similarly have 
		$$
		(x_2-x_3)(-\sum_{i=2}^\ell (x_i-x_{i+1})-(x_1-x_2))=0.
		$$
		So it suffices to check the remaining part, i.e., the coefficient of each
		$\theta_{\ell-2-j}$, which is of the form  
		$$
		\sum_{i=2}^\ell (x_i-x_{i+1})(e_j^i-e_j^1)
		=\sum_{i=2}^\ell (x_i-x_{i+1})e_j^i+(x_1-x_2)e_j^1=
		\sum_{i=1}^\ell (x_i-x_{i+1})e_j^i=:C_j.
		$$
		We show this is zero by induction on $j \ge 1$. 
		For $j=1$ we have 
		\begin{eqnarray*}
			\ddxi{1}(C_1)&=&
			\sum_{i=3}^\ell x_i+\sum_{i=2}^{\ell-1}(x_i-x_{i+1})
			-\sum_{i=2}^{\ell-1}x_i\\
			&=&
			\sum_{i=3}^\ell x_i+(x_2-x_\ell)
			-\sum_{i=2}^{\ell-1}x_i=0.
		\end{eqnarray*}
		By an analogous computation we have $\ddxi{i}(C_1)=0$ for all $i$. Thus $C_1=0$. So assume that $C_s=0$ for $s \le j$ and let 
		us prove that $C_{j+1}=0$. 
		Compute 
		\begin{eqnarray*}
			\ddxi{1}(C_{j+1})&=&
			e_{j+1}^1+\sum_{i=2}^{\ell-1}(x_i-x_{i+1})e_{j}(x_2,\ldots,
			\hat{x}_i,\hat{x}_{i+1},\ldots,x_\ell)-e_{j+1}^\ell \\
			&=&
			\sum_{i=2}^{\ell-1}(x_i-x_{i+1})e_{j}(x_2,\ldots,
			\hat{x}_i,\hat{x}_{i+1},\ldots,x_\ell)+(x_\ell-x_2)
			e_{j}(x_3,\ldots,x_{\ell-1})\\
			&+&e_{j+1}(x_3,\ldots,x_{\ell-1})-
			e_{j+1}(x_3,\ldots,x_{\ell-1})=0
		\end{eqnarray*}
		by the induction hypothesis.
		
		In sum, we have established the exactness of our resolution.
		
		Now, recalling that all the coefficients in the 
		resolution (\ref{eq:min}) are of positive degree, we see that it is moreover a 
		minimal free resolution. This finishes the proof. 
	\end{proof}

	
	\section{Open problems}\label{sec:OpenProblems}
	We conclude the article by mentioning a few open problems and further directions of research.
	
	For free graphic arrangements which correspond to chordal graphs by \Cref{thm:ChordalFree},
	the degrees of the generators in a basis of the derivation module have a nice description in terms of the combinatorics of the graph,
	namely vertex degrees along a vertex elimination orderings, cf.\ \autocite[Lem.~3.4]{Edelman}.
	Thus, the following problem arises from \Cref{thm:main}.
	\begin{problem}
		Determine the graded Betti numbers of $D(\A(G))$ for a weakly chordal graph $G$.
	\end{problem}
	
	A further natural question arising from our Theorem \ref{thm:main} would be if this generalizes to the remaining projective dimensions, i.e. if $\A(G)$ has projective dimension $\leq k$ if and only if $G$ and its complement graph do not contain a chordless cycle with $k+4$ or more vertices. This is however not the case, first note that in the case of projective dimension $0$, it suffices for the graph itself to have no chordless cycle of length $4$ or more and chordality is not closed under taking the complement (The complement of the $4$-cycle for instance, is chordal, where the $4$-cycle itself is not). 
	Moreover, since the arrangement of the $k$-cycle is generic of rank $k-1$, it has maximal projective dimension $k-3$ (see \Cref{ex:CycleGeneric})
	and by \Cref{thm:AntiholePD2} its complement has projective dimension $2$.
	Moreover, we found two counterexamples to the other direction of this conjecture in dimension 7; both graphs and their complements have no induced cycle of length more than 5, yet have projective dimension 3, see \Cref{fig:prism2} which was also found by Hashimoto in \cite{Hashimoto}.
	
	\begin{figure}
		\includegraphics[width=0.5\textwidth]{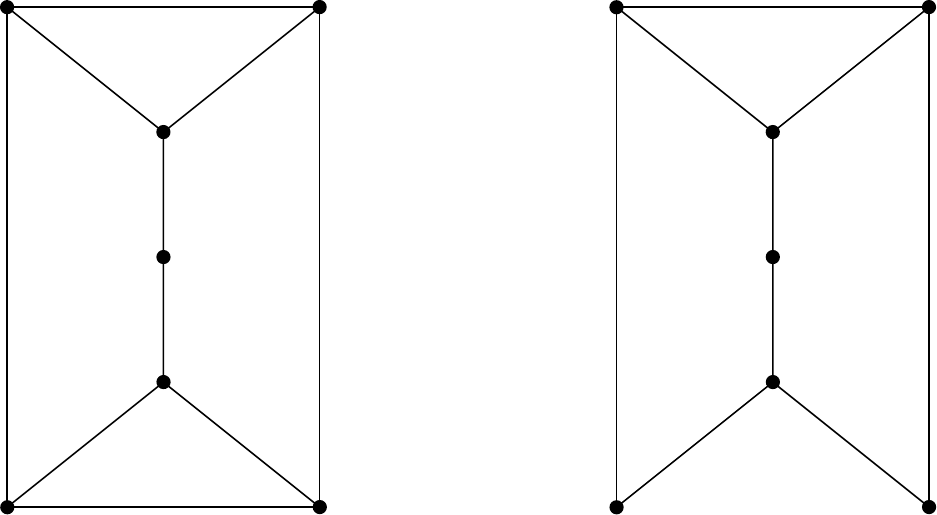} 
		\caption{Two graphs with 7 vertices, whose graphic arrangements have projective dimension 3.}
		\label{fig:prism2}
	\end{figure}
	
	Lastly, we would like to mention that the problem of understanding the projective dimension of the logarithmic $p$-forms $\Omega^p(\A)$ with poles along $\A$ (cf.\ \autocite[Def.\ 4.64]{OrlikTerao}) greatly differs from the one we discuss in this article.
	As the module $\Omega^1(\A)$ and the module $D(\A)$ are dual to each other, one of them is free if and only if the other is free.
	In the non-free scenario they behave differently however:
	For instance we have $\pd(\Omega^1(\A(C_\ell)))=1$ while we have $\pd(D(\A(C_\ell)))=\ell-3$ for $\ell\ge 4$.
	There are furthermore graphs $G$ with
	\[\pd(\Omega^1(\A(G)))>\pd(D(\A(G)));\]
	one such example is the complete graph on six vertices $K_6$ with three long diagonals removed.
	So it seems to be an interesting but intricate problem for further research to understand 
	for which graphic arrangements the projective dimension of the logarithmic $1$-forms is bounded by one.

	
	\printbibliography
	
\end{document}